\setlist[description]{%
  %topsep=30pt,               % space before start / after end of list
  itemsep=0.05cm,               % space between items
  font={\normalfont\textsc}, % set the label font
%  font={\bfseries\sffamily\color{red}}, % if colour is needed
 leftmargin=\parindent,
 labelindent=\parindent
}
\theoremstyle{definition}
\newtheorem{theorem}{Theorem}[section]
\newtheorem{lemma}[theorem]{Lemma}
\newtheorem{proposition}[theorem]{Proposition}
\newtheorem{corollary}[theorem]{Corollary}
\theoremstyle{definition}
\newtheorem{definition}[theorem]{Definition}
\newtheorem{remark}[theorem]{Remark}
\newtheorem{question}[theorem]{Question}
\newtheorem{questions}[theorem]{Questions}
\newtheorem{remarks}[theorem]{Remarks}
\newtheorem*{conjecture*}{Conjecture}
\newtheorem*{problem*}{Problem}
\newtheorem*{question*}{Question}
\definecolor{blue-url}{RGB}{0,0,100}
\definecolor{red-url}{RGB}{100,0,0}
\definecolor{green-url}{RGB}{0,100,0}
\definecolor{light-yellow}{RGB}{255,255,128}
\definecolor{light-blue}{RGB}{193,255,255}
\definecolor{light-red}{RGB}{239,83,80}
\renewcommand{\emptyset}{\varnothing}
\renewcommand{\setminus}{\smallsetminus}
\renewcommand{\,}{\kern 0.1em}
\providecommand\llb{\llbracket}
\providecommand\rrb{\rrbracket}
\providecommand\dinf{\mathsf{d}_\ast}
\newcommand{\evid}[1]{\textsf{#1}}
\newcommand{\fin}{\mathrm{fin}}
\newcommand{\dil}[2]{#1 \times #2}
\newline\vspace{\abovedisplayskip}\hbox to \textwidth\bgroup\hss$\displaystyle}
\egroup\vspace{\belowdisplayskip}}
\DeclareFontFamily{OMX}{MnSymbolE}{}
\DeclareSymbolFont{MnLargeSymbols}{OMX}{MnSymbolE}{m}{n}
\DeclareFontShape{OMX}{MnSymbolE}{m}{n}{
	<-6>  MnSymbolE5
	<6-7>  MnSymbolE6
	<7-8>  MnSymbolE7
	<8-9>  MnSymbolE8
	<9-10> MnSymbolE9
	<10-12> MnSymbolE10
	<12->   MnSymbolE12
}{}
\DeclareFontShape{OMX}{MnSymbolE}{b}{n}{
	<-6>  MnSymbolE-Bold5
	<6-7>  MnSymbolE-Bold6
	<7-8>  MnSymbolE-Bold7
	<8-9>  MnSymbolE-Bold8
	<9-10> MnSymbolE-Bold9
	<10-12> MnSymbolE-Bold10
	<12->   MnSymbolE-Bold12
}{}
\let\llangle\@undefined
\let\rrangle\@undefined
\DeclareMathDelimiter{\llangle}{\mathopen}%
{MnLargeSymbols}{'164}{MnLargeSymbols}{'164}
\DeclareMathDelimiter{\rrangle}{\mathclose}%
{MnLargeSymbols}{'171}{MnLargeSymbols}{'171}
\begin{document}
\title{On global isomorphisms and a closure property of semigroups}

\author{Lingxi Li}
\address{(L.~Li) School of Mathematical Sciences, Hebei Normal University | Shijiazhuang, Hebei province, 050024 China}
\email{lingxi.li@qq.com}

\author{Salvatore Tringali}
\address{(S.~Tringali) School of Mathematical Sciences, Hebei Normal University | Shijiazhuang, Hebei province, 050024 China}
\email{salvo.tringali@gmail.com}
%\urladdr{http://imsc.uni-graz.at/tringali}

\subjclass[2020]{Primary 05E16, 11P99, 20E34, 20M10. Secondary 20M13.}
%Primary 11B13. Secondary 20M13, 39B52}
%
% 05E16: Combinatorial aspects of groups and algebras
% 11B30: arithmetic combinatorics
% 11P99: general additive NT
% 20E34: General structure theorems for groups
% 20M10: "General structure theory for semigroups" 
% 20M13: arithmetic of sgrps

\keywords{Additive number theory, isomorphism problems, numerical monoids, power monoids, power semigroups, product sets, sumsets.}

\begin{abstract}
Let $S$ be a semigroup (written multiplicatively). Endowed with the operation of setwise mul\-ti\-pli\-ca\-tion induced by $S$ on its parts, the non-empty subsets of $S$ form themselves a sem\-i\-group, de\-not\-ed by $\mathcal P(S)$. Accordingly, we say that a semigroup $H$ is globally isomorphic to a semigroup $K$ if $\mathcal P(H)$ is isomorphic to $\mathcal P(K)$; and that a class $\mathscr C$ of sem\-i\-groups is globally closed if a semigroup in $\mathscr C$ can only be globally isomorphic to an isomorphic copy of a semigroup in the same class.

We show that the classes of groups, torsion-free monoids, and numerical monoids are each globally closed. The first result extends a 1967 theorem of Shafer, while the last relies non-trivially on the second and on a classical theorem of Kneser from additive number theory.
\end{abstract}

\maketitle

\thispagestyle{empty}

\section{Introduction}
\label{sect:1}

Let $S$ be a semigroup (see the end of this section for notations and terminology). Endowed with the binary operation of setwise multiplication induced by $S$ on its parts and defined by
\[
XY := \{xy : x \in X, \, y \in Y\}, \qquad\text{for all } X, Y \subseteq S,
\]
the \textit{non-empty} subsets of $S$ form a semigroup in their own right, herein denoted by $\mathcal P(S)$ and called the \evid{large power semigroup} of $S$. Furthermore, the family of all non-empty \textit{finite} subsets of $S$ is a sub\-sem\-i\-group of $\mathcal P(S)$, denoted by $\mathcal P_\fin(S)$ and called the \evid{finitary power semigroup} of $S$. 

In the sequel, we will generically refer to either $\mathcal P(S)$ or $\mathcal P_\fin(S)$ as a \evid{power sem\-i\-group}. 
These structures have played a pivotal role in
the ongoing development of an arithmetic theory of semigroups and rings 
\cite{Fa-Tr18, An-Tr18, Tr20(c), Tr21(b), Co-Tr-21(a), Co-Tr-22(a), Co-Tr-22(b)} that aims to extend the classical theory of factorization \cite{Ger-Hal-06} beyond its traditional boundaries. Moreover, they provide a natural algebraic framework for various problems in additive combinatorics and closely related areas \cite{Bie-Ger-22, Tri-Yan-23(a), Tri-Yan2023(b), GarSan-Tri-24(a)}, including S\'ark\"ozy's conjecture on the ``additive irreducibility'' of the set of [non-zero] squares of a finite field of prime order \cite[Conjecture 1.6]{Sark2012}.
%and Ostmann's conjecture \cite[p.~13]{Ostm-1968} on the ``asymptotic additive irreducibility'' of the set of (positive rational) primes.

It seems that power semigroups made their first explicit appearance in a 1953 paper by Dubreil \cite{Dubr-1953}, though Dubreil argued in favor of including the empty set (see the unnumbered remark from loc.~cit., p.~281).
%Their systematic investigation was initiated in the late 1960s and continued quite intensively by semigroup theorists and computer scientists throughout the 1980s and 1990s. 
%By that time, researchers were especially interested in properties of $S$ that do or do not ascend to $\mathcal P(S)$, as well as in the study of varieties (namely, classes of semigroups that are closed under taking homomorphic images, subsemigroups, and finite direct products) generated by $\mathcal P(S)$ as $S$ ranges over a specified family of finite semigroups. 
%A main catalyst of these early developments has been the role of power semigroups in the study of formal languages and automata (see the surveys by Pin \cite{Pin1995} and Almeida \cite{Alm02} for additional information and background).
A turning point in their history was marked by a paper of Tamura and Shafer \cite{Tam-Sha1967} that has eventually led to the following questions, where a sem\-i\-group $H$ is said to be \evid{globally isomorphic} to a semigroup $K$ if there is a \evid{global isomorphism} from $H$ to $K$, that is, an isomorphism $\mathcal P(H) \to \mathcal P(K)$.

\begin{questions}
\label{ques:tamura-shafer-iso-problem}
Let $\mathscr C$ be a class of semigroups. Given $H, K \in \mathscr C$, is it true that
\begin{enumerate}[label=\textup{(\alph{*})}]
\item\label{ques:tamura-shafer-iso-problem(1)} $H$ is globally isomorphic to $K$ if and only if $H$ is isomorphic to $K$?
\item\label{ques:tamura-shafer-iso-problem(2)} $\mathcal P_\fin(H)$ is isomorphic to $\mathcal P_\fin(K)$ if and only if $H$ is isomorphic to $K$?
\end{enumerate}
\end{questions}
The interesting aspect of these questions lies in the ``only if\,'' direction. In fact, if $f$ is an isomorphism from a semigroup $H$ to a semigroup $K$, then its \evid{augmentation} 
\begin{equation}\label{equ:augmentation}
f^\ast \colon \mathcal P(H) \to \mathcal P(K) \colon X \mapsto f[X] := \{f(x) \colon x \in X\}
\end{equation}
is a global isomorphism from $H$ to $K$.
Moreover, $\mathcal P_\fin(H)$ is isomorphic to $\mathcal P_\fin(K)$ via the restriction of $f^\ast$ to the non-empty finite subsets of $H$, since $f^\ast(X) \in \mathcal P_\fin(K)$ for every $X \in \mathcal P_\fin(H)$. See \cite[Remark 4]{Tri-2024(a)} and \cite[Sect.~1]{GarSan-Tri-24(a)} for further details and context.

The answer to Question \ref{ques:tamura-shafer-iso-problem}\ref{ques:tamura-shafer-iso-problem(1)} is negative for the class of all semigroups \cite{Mog1973}; is positive for groups \cite{Shaf-1967}, semilattices \cite[p.~218]{Koba-1984}, completely $0$-simple semigroups and completely simple semigroups \cite[Theorems 5.9 and 6.8]{Tamu-1986}, Clifford semigroups \cite[Theorem 4.7]{Gan-Zhao-2014}, cancellative commutative semigroups \cite[Corollary 1]{Tri-2024(a)}, etc.;
and is open for finite semigroups \cite[p.~5]{Hami-Nord-2009}, despite some authors having claimed the opposite based on results announced by Tamura in \cite{Tamu-1987} but never proved.

As for Question \ref{ques:tamura-shafer-iso-problem}\ref{ques:tamura-shafer-iso-problem(2)}, very little seems to be known outside the case where $\mathscr C$ is a class of \textit{finite} semigroups contained in any of the classes that are already addressed by the ``positive results'' reviewed in the previous paragraph (note that the large and finitary power semigroups of a semigroup $S$ coincide if and only if $S$ is finite). More precisely, Bienvenu and Geroldinger have shown in \cite[Theorem 3.2(3)]{Bie-Ger-22} that the problem has an affirmative answer for numerical monoids, and the conclusion has been subsequently generalized to cancellative commutative semigroups in \cite[Corollary 1]{Tri-2024(a)}. Here as usual, a \evid{numerical monoid} is a sub\-monoid of the additive monoid of non-negative integers with finite complement in $\mathbb N$.

Recent investigations have also addressed some variants of the above questions. Specifically, let $M$ be a monoid, with $1_M$ denoting its identity (element) and $M^\times$ its group of units. Then,  $\mathcal P(M)$ is itself a monoid and $\mathcal P_\fin(M)$ is a submonoid of $\mathcal P(M)$, their identity being the singleton $\{1_M\}$; we will accordingly call $\mathcal P(M)$ the \evid{large power monoid} of $M$.
In addition, each of the families
\[
\mathcal P_1(M) := \{X \in \mathcal P(M) \colon 1_M \in X\}
\qquad\text{and}\qquad
\mathcal P_\times(M) := \{X \in \mathcal P(M) : X \cap M^\times \ne \emptyset\}
\]
is a submonoid of $\mathcal P(M)$, and each of 
\[
\mathcal P_{\fin,1}(M) := \mathcal P_1(M) \cap \mathcal P_\fin(M)
%\{X \in \mathcal P_\fin(M) \colon 1_M \in X\}
\qquad\text{and}\qquad
\mathcal P_{\fin,\times}(M) := \mathcal P_\times(M) \cap \mathcal P_\fin(M)
%\{X \in \mathcal P_\fin(M) : X \cap M^\times \ne \emptyset\}
\]
is a submonoid of $\mathcal P_\fin(M)$.
%To our knowledge, $\mathcal{P}_1(M)$ made its earliest appearance in the literature in a 1984 paper by Margolis and Pin \cite{Marg-Pin-1984}, the definition of $\mathcal{P}_\times(M)$ is new, and the last two constructions were first considered by Fan and Tringali in \cite{Fa-Tr18}. 
Some basic relations among these structures are summarized in the diagram below, where a ``hooked arrow'' $P \hookrightarrow Q$ means the inclusion map from $P$ to $Q$ and a ``tailed arrow'' $P \rightarrowtail Q$ means the embedding $P \to Q \colon x \mapsto \{x\}$:
\vskip 0.2cm
\begin{center}
\begin{tikzpicture}[scale=1]
% nodes

\node (A1) at (0,0) {$\{1_M\}$};
\node (A2) at (2.6,0) {$M^\times$};
\node (A3) at (5.2,0) {$M$};

\node (B1) at (0,-1.3) {$\mathcal P_{\fin,1}(M)$};
\node (B2) at (2.6,-1.3) {$\mathcal P_{\fin,\times}(M)$};
\node (B3) at (5.2,-1.3) {$\mathcal P_\fin(M)$};

\node (C1) at (0,-2.6) {$\mathcal P_1(M)$};
\node (C2) at (2.6,-2.6) {$\mathcal P_\times(M)$};
\node (C3) at (5.2,-2.6) {$\mathcal P(M)$};

% arrow

\draw[right hook->, shorten <= 1pt, shorten >= 1pt] (A1) -- (A2);
\draw[>->, shorten <= 1pt, shorten >= 1pt] (A1) -- (B1);
\draw[right hook->, shorten <= 1pt, shorten >= 1pt] (A2) -- (A3);
\draw[>->, shorten <= 1pt, shorten >= 1pt] (A2) -- (B2);
\draw[>->, shorten <= 1pt, shorten >= 1pt] (A3) -- (B3);

\draw[right hook->, shorten <= 1pt, shorten >= 1pt] (B1) -- (B2);
\draw[right hook->, shorten <= 1pt, shorten >= 1pt] (B1) -- (C1);
\draw[right hook->, shorten <= 1pt, shorten >= 1pt] (B2) -- (B3);
\draw[right hook->, shorten <= 1pt, shorten >= 1pt] (B2) -- (C2);
\draw[right hook->, shorten <= 1pt, shorten >= 1pt] (B3) -- (C3);

\draw[right hook->, shorten <= 1pt, shorten >= 1pt] (C1) -- (C2);
\draw[right hook->, shorten <= 1pt, shorten >= 1pt] (C2) -- (C3);
\end{tikzpicture}
\end{center}

Here, we focus our attention on $\mathcal P_{\fin,1}(H)$, though some aspects of the theory (see, e.g., Corollary \ref{cor:finite-back-to-finite} and Remark \ref{remark:conjecture}) would benefit from a better understanding of the synergies among  
the objects in the second and third rows of the diagram.
Most notably, by analogy with Questions \ref{ques:tamura-shafer-iso-problem}, we ask the following:

\begin{question}\label{ques:BG-like-for-monoids}
Let $\mathscr C$ be a class of monoids. Is it true that $\mathcal P_{\fin,1}(H)$ is isomorphic to $\mathcal P_{\fin,1}(K)$, for some $H, K \in \mathscr C$, if and only if $H$ is isomorphic to $K$?
\end{question}

Once again, the core of Question \ref{ques:BG-like-for-monoids} lies in proving the ``only if\,'' direction. In fact, let $f$ be a semigroup iso\-mor\-phism from a monoid $H$ to a monoid $K$, and let $f^\ast$ be the augmentation of $f$, as defined by Eq.~\eqref{equ:augmentation}. Then, $f$ maps the identity of $H$ to the identity of $K$ (see, for instance, the last lines of \cite[Sect.~2]{Tri-2024(a)}), and hence $f(u) \in K^\times$ for every $u \in H^\times$. Since $f[X]$ is a finite subset of $K$ for every $X \in \mathcal P_\fin(X)$, it is then routine to check \cite[Remark 1.1]{Tri-Yan-23(a)} that $f^\ast$ restricts to an isomorphism from $\mathcal P_{\fin,1}(H)$ to $\mathcal P_{\fin,1}(K)$.

With these preliminaries in mind, Tringali and Yan \cite[Theorem 2.5]{Tri-Yan-23(a)} have proved that the answer to Question \ref{ques:BG-like-for-monoids} is positive for the class of (\evid{rational}) \evid{Puiseux monoids}, that is, submonoids of the non-negative rational numbers under addition; the result has confirmed a conjecture by Bienvenu and Geroldinger \cite[Conjecture 4.7]{Bie-Ger-22} on numerical monoids. More recently, Rago \cite{Rago} has shown that the answer to the same question is negative for the class of cancellative commutative monoids, by establishing more generally that if $H$ and $K$ are cancellative (commutative) valuation monoids \cite[Definition 2.6.4.2]{Halter-Koch} with trivial groups of units and isomorphic group of fractions, then $\mathcal P_{\fin,1}(H)$ is isomorphic to $\mathcal P_{\fin,1}(K)$. 

The present work contributes to this line of research by inquiring into a certain ``closure property'' of (classes of) semigroups. More precisely, we have the following:

\begin{definition}\label{def:globally-closed-classes}
A class $\mathscr C$ of semigroups is \evid{globally closed} if, whenever a semigroup $H \in \mathscr C$ is globally isomorphic to a semigroup $K$, there exists a semigroup $K' \in \mathscr C$ that is isomorphic to $K$.
\end{definition}

If the class $\mathscr C$ in Definition \ref{def:globally-closed-classes} is closed under isomorphisms, then being globally closed is equivalent to the property that a semigroup $H \in \mathscr C$ can only be globally isomorphic to another semigroup belonging to $\mathscr C$. For instance, it is obvious that the class of all semigroups is globally closed. Slightly more interestingly, the same is true of monoids \cite[Lemma 1.1]{Gou-Isk-Tsi-1984}. This leads us to the following:

\begin{question}
Is the class of cancellative semigroups globally closed?
\end{question}

The question is already challenging in the cancellative \textit{commutative} setting, and our main goal here is to show that the answer is positive for groups (Corollary \ref{cor:groups-are-globally-closed}) and numerical monoids (Theorem \ref{thm:numerical-monoids-are-globally-closed}).

It follows from the first of these results that if a group $H$ is globally isomorphic to a semigroup $K$, then $H$ is isomorphic to $K$ (and hence $K$ is itself a group). We can thus extend a 1967 theorem of Shafer \cite{Shaf-1967}, according to which two groups are globally isomorphic (that is, one is globally isomorphic to the other) if and only if they are isomorphic: the (non-trivial) difference here is that, in Shafer's half-page note, both $H$ and $K$ are assumed to be groups from the outset. 
\begin{comment}
Incidentally, the question of the global closure of groups was posed by user Micha{\l} Masny on MathOverflow in October 2012:
\begin{center}
\url{https://mathoverflow.net/questions/110412/#comment284325_110421}.
\end{center}
Benjamin Steinberg (CUNY Graduate Center, US) promptly provided an answer for \textit{finite} groups:
\begin{center}
\url{https://mathoverflow.net/a/110506/16537}.
\end{center}
Our proof of the general case takes a different path from Steinberg's, which allows us to establish as a bonus that
\end{comment}
As a byproduct of the proof, we obtain that every global isomorphism from a monoid $H$ to a monoid $K$ maps $H^\times$ to $K^\times$, and hence restricts to a global isomorphism from one group of units to the other (Theorem \ref{prop:global-iso-maps-unit-group-to-unit-group}).

As for the second result (Theorem \ref{thm:numerical-monoids-are-globally-closed}), we gather from \cite[Corollary 1]{Tri-2024(a)} that two cancellative commutative semigroups are globally isomorphic if and only if they are isomorphic. We are thus reduced to demonstrating that a numerical monoid can only be globally isomorphic to a cancellative commutative semigroup. Crucial to this end will be a classical theorem of Kneser \cite[Theorem I.17$^\prime$]{Halb-Roth-1967} on sets of non-negative integers satisfying a small doubling condition with respect to the lower asymptotic density (see Sect.~\ref{sec:additive_NT} and, in particular, Theorem \ref{thm:kneser-thm}). Another ingredient is Theorem \ref{thm:torsion-free-monoids-are-globally-closed}, where we establish that the class of torsion-free monoids is globally closed, and the heart of whose proof is essentially a rough counting argument for the solutions to certain equations involving idempotents.
 
\subsection*{Generalities.} We denote by $\mathbb N$ the (set of) non-negative integers, by $\mathbb N^+$ the positive integers, by $\mathbb Z$ the integers, by $|X|$ the cardinality of a set $X$, and by $f^{-1}$ the (functional) inverse of a bijection $f$.
Unless otherwise stated, we reserve the letters $m$ and $n$ (with or without subscripts) for positive integers.

If $a, b \in \mathbb Z$, we let $\llb a, b \rrb := \{x \in \allowbreak \mathbb Z \colon \allowbreak a \le x \le b\}$ be the (\evid{discrete}) \evid{interval} from $a$ to $b$. Given $k \in \mathbb N$ and $X \subseteq \mathbb Z$, we use $kX := \{x_1 + \cdots + x_k: x_1, \ldots, x_k \in X\}$ for the \evid{$k$-fold sum} and $k \times X := \allowbreak \{kx \colon \allowbreak x \in X\}$ for the \evid{$k$-dilate}
of $X$; in particular, $0X = \{0\}$. It is a simple exercise to check that 
\[
n(k \times \mathbb N) = k \times \mathbb N,\qquad \text{for all } n \in \mathbb N^+;
\]
and
\[
nkX := (nk)X = n(kX)
\quad\text{and}\quad
nk \times X := \allowbreak (nk) \times \allowbreak X = n \times (k \times X),\qquad \text{for all } n \in \mathbb N.
\]
Later on, we will often rely on these elementary properties without further comment.

If not explicitly specified, we write all semigroups (and monoids) multiplicatively. An element $a$ in a semigroup $S$ is \evid{cancellative} if $ax \ne ay$ and $xa \ne ya$ for all $x, y \in S$ with $x \ne y$; the semigroup itself is cancellative if each of its elements is. We say on the other hand that $S$ is a \evid{torsion-free} semigroup if the map $\mathbb N^+ \to S \colon x \mapsto
x^n$ is injective for every $x \in S$, except for the identity element of $S$ in the case that $S$ is a monoid. Lastly, we recall that a \evid{unit} of a monoid $M$ with identity element $1_M$ is an element $u \in M$ for which there exists a (provably unique) element $v \in M$, accordingly denoted by $u^{-1}$ and called the \evid{inverse} of $u$ (in $M$), with the property that $uv = vu = 1_M$; and the monoid is \evid{Dedekind-finite} if $xy = 1_M$ for some $x, y \in M$ implies $yx = 1_M$. Commutative monoids and cancellative monoids are Dedekind-finite.

Further notation and ter\-mi\-nol\-o\-gy, if not explained when first used, are standard or should be clear from the context. In particular, we refer to Howie's monograph \cite{Ho95} for basic aspects of semigroup theory.

\section{Groups are globally closed}
\label{sect:2}

In this section, we establish that the class of groups is globally closed (Corollary \ref{cor:groups-are-globally-closed}). In the process, we derive a couple of results (Theorem \ref{prop:global-iso-maps-unit-group-to-unit-group}) that may hold independent interest, 
particularly in relation to Questions \ref{ques:BG-like-for-monoids}. We start with the following:

\begin{definition}
\label{def:unit-stable-element}
Given a monoid $M$, we say that an element $x \in M$ is \evid{unit-stable} if $x = ux = xu$ for all $u \in M^\times$, where $M^\times$ denotes the group of units of $M$. 
\end{definition}

Although every element of a monoid with trivial group of units is unit-stable (which may seem an issue at first glance), Definition \ref{def:unit-stable-element} turns out to be crucial. A few remarks are in order before proceeding.

\begin{remarks}\label{remarks:units-and-unit-stability}
\begin{enumerate*}[label=\textup{(\arabic{*})}, mode=unboxed]
\item\label{remarks:units-and-unit-stability(1)} Let $M$ be a monoid and $X$ be a non-empty subset of $M$. Since $1_M \in M^\times$ and hence $X = \allowbreak X 1_M \subseteq XM^\times$, it is clear that $XM^\times = M^\times$ yields $X \subseteq M^\times$. On the other hand, we have $uM^\times = \allowbreak M^\times$ for every $u \in M^\times$. Therefore, if $X \subseteq M^\times$, then 
\[
XM^\times = \bigcup_{u \in X} uM^\times = M^\times.
\]
By symmetry, this proves that $X \subseteq M^\times$ if and only if $XM^\times = M^\times$, if and only if $M^\times X = M^\times$.
\end{enumerate*}

\vskip 0.05cm

\begin{enumerate*}[label=\textup{(\arabic{*})}, resume, mode=unboxed]
\item\label{remarks:units-and-unit-stability(2)} 
%By \cite{Shaf-1967}, t
The units of the large power monoid $\mathcal{P}(M)$ of a monoid $M$ are precisely 
the singletons $\{u\}$ with $u \in M^\times$ (see Proposition~3.2(ii) in 
\cite{Fa-Tr18} for the analogous statement in the reduced finitary power monoid 
$\mathcal{P}_{\mathrm{fin}}(M)$ of $M$). 
In particular, if $U, V \in \mathcal{P}(M)$ are such that $UV = VU = \{1_M\}$, 
then $uv = vu = 1_M$ for all $u \in U$ and $v \in V$. Therefore, $U$ and $V$ 
are subsets of $M^\times$. This shows that every element of $V$ is cancellative, so that $|U| \le |UV| = 1$ and hence $U = \{u\}$ for some $u \in M^\times$.\\

\indent{}It follows that a non-empty subset $X \subseteq M$ is unit-stable as an element of 
$\mathcal{P}(M)$ if and only if 
\[
uX = Xu = X, \qquad \text{for all } u \in M^\times,
\]
which is equivalent to 
\[
M^\times X = \bigcup_{u \in M^\times} uX = X. 
\]
Conversely, if $M^\times X = X$ and $u \in M^\times$, then 
\[
uX \subseteq M^\times X = X \subseteq u(u^{-1}X) \subseteq u(M^\times X) = uX.
\]
By symmetry, we conclude that $X$ is unit-stable in 
$\mathcal{P}(M)$ if and only if $
M^\times X = XM^\times = X$.
\end{enumerate*}

\vskip 0.05cm

\begin{enumerate*}[label=\textup{(\arabic{*})}, resume, mode=unboxed]
\item\label{remarks:units-and-unit-stability(3)} Let $f$ be a semigroup isomorphism from a monoid $H$ to a monoid $K$. 
It is a basic fact that $f$ sends the identity $1_H$ of $H$ to the identity $1_K$ of $K$ 
(see, e.g., the last lines of \cite[Sect.~2]{Tri-2024(a)}). Hence, $f$ restricts to an isomorphism from $H^\times$ to $K^\times$, 
and it is then routine to check that it preserves unit-stable elements.
%(we leave the simple details to the reader).
%
%\indent{}Indeed, let $x$ be a unit-stable element of $H$ and $v$ be a unit of $K$.  
%We gather from the above and the surjectivity of $f$ that $v = f(u)$ for some unit $u \in H$. So, $vf(x) = \allowbreak f(u) f(x) = \allowbreak f(ux) = f(x)$; and in a similar way, $f(x) = f(x) v$. It follows that $f(x)$ is a unit-stable element of $K$.
\end{enumerate*}
\end{remarks}

We are now in a position to prove the main theorem of the section (and its corollary for groups).

\begin{theorem}
\label{prop:global-iso-maps-unit-group-to-unit-group}
The following hold for a global isomorphism $f$ from a monoid $H$ to a monoid $K$:
\begin{enumerate}[label=\textup{(\roman{*})}]
\item\label{prop:global-iso-maps-unit-group-to-unit-group(1)} 
$f(H^\times) = K^\times$.
\item\label{prop:global-iso-maps-unit-group-to-unit-group(2)} $f$ restricts to a global isomorphism from $H^\times$ to $K^\times$.
\end{enumerate}
\end{theorem}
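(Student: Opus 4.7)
The plan is to derive \ref{prop:global-iso-maps-unit-group-to-unit-group(1)} from a purely algebraic characterization of $H^\times$ inside $\mathcal{P}(H)$, and then to obtain \ref{prop:global-iso-maps-unit-group-to-unit-group(2)} as a direct consequence via Remark \ref{remarks:units-and-unit-stability}\ref{remarks:units-and-unit-stability(1)}. First, since every semigroup isomorphism between monoids preserves the identity, $f$ is a monoid isomorphism. By Remark \ref{remarks:units-and-unit-stability}\ref{remarks:units-and-unit-stability(2)}, the units of $\mathcal{P}(M)$ are exactly the singletons $\{u\}$ with $u \in M^\times$, so $f$ sends unit-singletons of $\mathcal{P}(H)$ bijectively to those of $\mathcal{P}(K)$ and therefore preserves unit-stability (an algebraic property defined purely via units).

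The crux is the claim that $H^\times$ is the \emph{unique} element $X \in \mathcal{P}(H)$ satisfying both \textup{(a)} $X$ is unit-stable in $\mathcal{P}(H)$, and \textup{(b)} $YX = Y$ for every unit-stable $Y \in \mathcal{P}(H)$. That $H^\times$ satisfies (a) is immediate from $H^\times H^\times = H^\times$, and it satisfies (b) by the very definition of unit-stability (see Remark \ref{remarks:units-and-unit-stability}\ref{remarks:units-and-unit-stability(2)}). For uniqueness, if $X$ obeys (a) and (b), then taking $Y = H^\times$ in (b) yields $H^\times X = H^\times$, whereas (a) gives $H^\times X = X$, and so $X = H^\times$. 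Since (a) and (b) are algebraic conditions preserved by $f$, the analogous characterization in $\mathcal{P}(K)$ forces $f(H^\times) = K^\times$, which establishes \ref{prop:global-iso-maps-unit-group-to-unit-group(1)}.

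For \ref{prop:global-iso-maps-unit-group-to-unit-group(2)}, Remark \ref{remarks:units-and-unit-stability}\ref{remarks:units-and-unit-stability(1)} tells us that a non-empty $X \subseteq H$ lies in $H^\times$ iff $XH^\times = H^\times$; in view of $f(H^\times) = K^\times$, this is equivalent to $f(X)\, K^\times = K^\times$, i.e., to $f(X) \subseteq K^\times$. Hence $f$ restricts to a bijection $\mathcal{P}(H^\times) \to \mathcal{P}(K^\times)$ which, being the restriction of a semigroup isomorphism, is itself a semigroup isomorphism, that is, a global isomorphism from $H^\times$ to $K^\times$. The main obstacle I anticipate is devising the right form of condition (b): $H^\times$ must be algebraically distinguished from the many other unit-stable idempotents in $\mathcal{P}(H)$ (such as proper subgroups of $H^\times$, or ``absorbing'' elements like $\{0\}$ in the multiplicative monoid $(\mathbb{Z}, \cdot)$), and the correct invariant turns out to be that $H^\times$ acts as a two-sided identity on every unit-stable element.
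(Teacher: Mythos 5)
Your proof is correct and rests on exactly the same ingredients as the paper's: that $M^\times$ is a unit-stable element of $\mathcal P(M)$, that every unit-stable element absorbs $M^\times$ on either side, and that a global isomorphism and its inverse both preserve unit-stability. Packaging part \ref{prop:global-iso-maps-unit-group-to-unit-group(1)} as a unique algebraic characterization of $H^\times$ among unit-stable elements is only a mild reorganization of the paper's direct computation $K^\times = f(H^\times)\,K^\times = f(H^\times)$, and your part \ref{prop:global-iso-maps-unit-group-to-unit-group(2)} coincides with the paper's argument via Remark \ref{remarks:units-and-unit-stability}\ref{remarks:units-and-unit-stability(1)}.
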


\begin{proof}
\ref{prop:global-iso-maps-unit-group-to-unit-group(1)} Let $\Omega(M)$ be the set of unit-stable elements of the large power monoid $\mathcal P(M)$ of a monoid $M$. Since the inverse $f^{-1}$ of $f$ is a global isomorphism from $K$ to $H$, it is clear from Remark \ref{remarks:units-and-unit-stability}\ref{remarks:units-and-unit-stability(3)} that $f[\Omega(H)] = \Omega(K)$. On the other hand, $M^\times$ is a unit-stable element of $\mathcal P(M)$, as guaranteed by Remark \ref{remarks:units-and-unit-stability}\ref{remarks:units-and-unit-stability(2)} when noting that $M^\times M^\times = M^\times$.

As a result, $f(H^\times) \in \Omega(K)$, which, by the same Remark \ref{remarks:units-and-unit-stability}\ref{remarks:units-and-unit-stability(3)}, implies $f(H^\times) K^\times = f(H^\times)$. Likewise, $H^\times X = X$, where $X := f^{-1}(K^\times) \in \Omega(H)$. Therefore, 
\[
K^\times = f(X) = f(H^\times) f(X) = f(H^\times) K^\times = f(H^\times).
\]

\ref{prop:global-iso-maps-unit-group-to-unit-group(2)} 
Let $X \in \mathcal P(H^\times)$. By Remark \ref{remarks:units-and-unit-stability}\ref{remarks:units-and-unit-stability(1)}, we have $H^\times = \allowbreak XH^\times$. It thus follows from part \ref{prop:global-iso-maps-unit-group-to-unit-group(1)} that 
\[
K^\times = \allowbreak f(H^\times) = \allowbreak f(X) f(H^\times) = f(X) K^\times, 
\]
which, again by Remark \ref{remarks:units-and-unit-stability}\ref{remarks:units-and-unit-stability(1)}, shows that $f(X) \subseteq \allowbreak K^\times$. This yields $f[\mathcal P(H^\times)] \subseteq \mathcal P(K^\times)$, and the same reasoning applied to $f^{-1}$ establishes the reverse inclusion (thereby completing the proof).
\end{proof}

\begin{corollary}
\label{cor:groups-are-globally-closed}
The class of groups is globally closed.
\end{corollary}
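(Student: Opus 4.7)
The plan is to combine Theorem \ref{prop:global-iso-maps-unit-group-to-unit-group}(i) with the observation that a group, viewed inside its own large power semigroup, is an absorbing (``zero'') element. Concretely, fix a global isomorphism $f \colon \mathcal P(H) \to \mathcal P(K)$, where $H$ is a group and $K$ is an arbitrary semigroup; I want to conclude that $K$ is itself a group (whence the class of groups is trivially globally closed, being closed under isomorphism).

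The first step is to upgrade $K$ to a monoid so that Theorem \ref{prop:global-iso-maps-unit-group-to-unit-group} applies. Since $\{1_H\}$ is the identity of $\mathcal P(H)$, the element $E := f(\{1_H\})$ is the identity of $\mathcal P(K)$; from $E\{y\} = \{y\} = \{y\}E$ for every $y \in K$ it follows that each element of $E$ is a two-sided identity of $K$, and uniqueness of identities in a semigroup forces $E = \{1_K\}$ for some identity $1_K$ of $K$. (Alternatively one could quote \cite[Lemma 1.1]{Gou-Isk-Tsi-1984} to the same effect.)

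Next I would observe that $H$ is an absorbing element of $\mathcal P(H)$: for every non-empty $X \subseteq H$ and every $h \in H$, fixing any $x \in X$ and writing $h = (hx^{-1})x$ shows $HX = H$, and symmetrically $XH = H$. Since $f$ is a semigroup isomorphism, $f(H)$ is correspondingly an absorbing element of $\mathcal P(K)$. On the other hand, $H$ being a group means $H^\times = H$, so Theorem \ref{prop:global-iso-maps-unit-group-to-unit-group}(i) identifies $f(H) = K^\times$. Hence $K^\times$ is absorbing in $\mathcal P(K)$, that is,
\[
\{y\} K^\times = K^\times \{y\} = K^\times, \qquad \text{for all } y \in K.
\]

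It remains to deduce $K = K^\times$. Pick any $y \in K$: specializing the absorbing equations to the element $1_K \in K^\times$ produces $v, w \in K^\times$ with $yv = 1_K = wy$, and the standard computation $w = w(yv) = (wy)v = v$ shows that $y \in K^\times$. So $K = K^\times$ is a group, as desired. The only conceptually non-trivial moment in the plan is the identification of the absorbing image $f(H)$ with $K^\times$; this is precisely where Theorem \ref{prop:global-iso-maps-unit-group-to-unit-group}(i) does the substantive work, and everything else reduces to elementary manipulations with identities and inverses in a monoid.
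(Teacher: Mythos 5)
Your proof is correct, and it reaches the conclusion by a slightly different route than the paper in its final step. Both arguments begin the same way: $K$ is first upgraded to a monoid (your direct argument that the identity $E=f(\{1_H\})$ of $\mathcal P(K)$ must be a singleton two-sided identity is a fine in-line substitute for the citation of \cite[Lemma 1.1]{Gou-Isk-Tsi-1984}), and both then invoke Theorem \ref{prop:global-iso-maps-unit-group-to-unit-group}. The divergence is in how one deduces $K=K^\times$. The paper uses part \ref{prop:global-iso-maps-unit-group-to-unit-group(2)}: since $H=H^\times$, the restriction of $f$ is an isomorphism $\mathcal P(H)\to\mathcal P(K^\times)$, while $f$ itself is an isomorphism $\mathcal P(H)\to\mathcal P(K)$; as $\mathcal P(K^\times)\subseteq\mathcal P(K)$, surjectivity forces $\mathcal P(K^\times)=\mathcal P(K)$ and hence $K=K^\times$. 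You instead use only part \ref{prop:global-iso-maps-unit-group-to-unit-group(1)}, combined with the observation that the group $H$ is an absorbing element of $\mathcal P(H)$, so that its image $f(H)=f(H^\times)=K^\times$ is absorbing in $\mathcal P(K)$; the equations $\{y\}K^\times=K^\times\{y\}=K^\times$ then hand you a right and a left inverse of each $y\in K$, which coincide by the standard computation. Your version is marginally more self-contained in that it bypasses part \ref{prop:global-iso-maps-unit-group-to-unit-group(2)} and the containment argument on power semigroups, at the cost of introducing the absorbing-element observation; the paper's version is shorter once both parts of Theorem \ref{prop:global-iso-maps-unit-group-to-unit-group} are available. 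Either way the substantive input is the identification of $f(H^\times)$ with $K^\times$, exactly as you note.
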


\begin{proof}
Let $f$ be a global isomorphism from a group $H$ to a semigroup $K$. By \cite[Lemma 1.1]{Gou-Isk-Tsi-1984}, $K$ is a monoid. Therefore, we derive from Theorem \ref{prop:global-iso-maps-unit-group-to-unit-group} that $f$ restricts to a global isomorphism from $H^\times$ to $K^\times$. But a group is, by definition, a monoid in which every element is a unit (that is, $H = H^\times)$. So, $f$ is in fact an isomorphism $\mathcal P(H) \to \mathcal P(K^\times)$. Since $\mathcal P(K^\times)$ is contained in $\mathcal P(K)$ and, by hypothesis, $f$ is an iso\-mor\-phism $\mathcal P(H) \to \mathcal P(K)$, this is only possible if $K = K^\times$, which means that $K$ is also a group.
\end{proof}
\section{Torsion-free monoids are globally closed}
\label{sect:3_torsion-free-monoids}
 
For a semigroup $S$, the \evid{order} of an element $x \in S$ is the cardinality of the set $\{x^n : n \in \mathbb{N}^+\}$, that is, the (cyclic) subsemigroup of $S$ generated by $x$. Saying that $S$ is torsion-free is then equivalent to saying that $S$ has no elements of finite order, except for the identity element in the case that $S$ is a monoid. 

Our goal in this short section is to prove that the class of torsion-free monoids is globally closed (Theorem \ref{thm:torsion-free-monoids-are-globally-closed}). We begin with a couple of propositions that may be of independent interest.

\begin{proposition}
\label{prop:S-torsion-free-iff-Pfin(S)}
A semigroup $S$ is torsion-free if and only if its finitary power semigroup $\mathcal P_\fin(S)$ is.
\end{proposition}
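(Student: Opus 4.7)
The plan is to treat the two directions separately, with essentially all the work concentrated in the forward one. For the reverse implication I would exploit the singleton embedding $S \to \mathcal{P}_\fin(S) \colon x \mapsto \{x\}$, which is an injective semigroup homomorphism satisfying $\{x\}^n = \{x^n\}$ and which, when $S$ is a monoid, sends $1_S$ to the identity $\{1_S\}$ of $\mathcal{P}_\fin(S)$. Hence a non-identity element $x \in S$ of finite order would yield a non-identity element $\{x\} \in \mathcal{P}_\fin(S)$ of finite order, proving the contrapositive that if $\mathcal{P}_\fin(S)$ is torsion-free so is $S$.

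For the forward direction I assume $S$ is torsion-free and take an arbitrary $X \in \mathcal{P}_\fin(S)$ of finite order, with the goal of concluding that $S$ is a monoid and $X = \{1_S\}$ (so that $X$ is the identity of $\mathcal{P}_\fin(S)$ and hence does not violate torsion-freeness). Since $\{X^n : n \in \mathbb{N}^+\}$ is finite, the sequence $(X^n)_{n \ge 1}$ is eventually periodic, so there exist $m, k \ge 1$ with $X^{n+k} = X^n$ for every $n \ge m$. Choosing $N$ large enough that $Nk \ge m$, the element $E := X^{Nk}$ then satisfies
\[
E \cdot E \;=\; X^{2Nk} \;=\; X^{Nk} \;=\; E,
\]
so $E$ is a finite \emph{idempotent} element of $\mathcal{P}_\fin(S)$.

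The heart of the argument is to show $E = \{1_S\}$. For any $e \in E$, iterating $E^2 = E$ gives $e^n \in E^n = E$ for every $n \ge 1$, so $\{e^n : n \in \mathbb{N}^+\} \subseteq E$ is finite; by torsion-freeness of $S$ this forces $S$ to be a monoid and $e = 1_S$, whence $E = \{1_S\}$. Equipped with $X^{Nk} = \{1_S\}$, I then pick any $x \in X$: the inclusion $x^{Nk} \in X^{Nk} = \{1_S\}$ yields $x^{Nk} = 1_S$, so $x$ has finite order, and a second application of torsion-freeness gives $x = 1_S$; therefore $X = \{1_S\}$. I expect the main obstacle to be precisely this finite-idempotent collapse step — the observation that a finite idempotent subset of a torsion-free semigroup can only be $\{1_S\}$ — since everything else reduces to routine manipulations with eventual periodicity of power sequences and with the singleton embedding.
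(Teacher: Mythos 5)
Your proof is correct and follows essentially the same route as the paper's: both directions are handled identically (singleton embedding for the reverse, and for the forward direction the observation that all powers of each element of $X$ are trapped in a fixed finite set, so torsion-freeness forces $S$ to be a monoid and $X=\{1_S\}$). Your detour through the idempotent $E=X^{Nk}$ is a harmless repackaging of the paper's direct argument that $x^{m+(n-m)k}\in X^m$ for all $k$.
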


\begin{proof}
Since any subsemigroup of a torsion-free sem\-i\-group is obviously torsion-free and the map $S \to \allowbreak \mathcal P_\fin(S) \colon \allowbreak x \mapsto \{x\}$ is a semigroup embedding, it suffices to prove the ``only if'' direction of the claim.

Assume $S$ is torsion-free, and suppose that there exist $m, n \in \allowbreak \mathbb N^+$ with $ m < n$ and a non-empty finite set $X \subseteq S$ such that $X^m = X^n$ in $\mathcal P_\fin(S)$. Then, a routine induction shows that $X^m = \allowbreak X^{m + (n-m)k}$ for every $k \in \mathbb N$, which in turn implies that
$x^{m+(n-m)k} \in X^m$ for all $x \in X$ and $k \in \mathbb N$. 
Since $X^m$ is a finite set, it follows that each $x \in X$ has finite order in $S$. However, this is only possible if $S$ is a monoid and $X$ is the identity element $\{1_S\}$ of $\mathcal P_\fin(S)$. Consequently, $\mathcal P_\fin(S)$ is torsion-free.
\end{proof}

%The next proposition may be of independent interest (cf.~Proposition \ref{prop:nr-of-reps} and Corollary \ref{cor:finite-nr-of-solutions}). 

\begin{proposition}
\label{prop:equation-with-idemps-in-torsion-free-case}
Let $S$ be a non-empty torsion-free semigroup, and suppose that $E$ is an idempotent of the large power semigroup $\mathcal P(S)$ of $S$. Then either $S$ is a monoid with identity $1_S$ and $E$ is the identity $\{1_S\}$ of $\mathcal P(S)$, or the equation $EXE = E$ has infinitely many solutions $X \in \mathcal P(S)$.
\end{proposition}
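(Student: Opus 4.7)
The plan is to dispose of the exceptional case immediately and then, in the remaining case, to exhibit an explicit infinite family of solutions $X \in \mathcal{P}(S)$ of $EXE = E$.

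Suppose we are not in the exceptional case, that is, $S$ is not a monoid or $S$ is a monoid with $E \ne \{1_S\}$. First, I would note that $|E| \ge 2$: if $E = \{e\}$ were a singleton, the condition $E^2 = E$ would force $e^2 = e$, and the torsion-freeness of $S$ entails that this is only possible when $S$ is a monoid with $e = 1_S$, contradicting the standing assumption. Next, I would select some $e \in E$ of infinite order in $S$: if $S$ is not a monoid this is automatic, and if $S$ is a monoid we can pick $e \in E \setminus \{1_S\}$, which is non-empty because $|E| \ge 2$ and $E \ne \{1_S\}$. Since $E \cdot E \subseteq E$, the set $E$ contains every power $e^n$; and since $e$ has infinite order, the powers $e, e^2, e^3, \ldots$ are pairwise distinct.

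For each $k \ge 1$ set $X_k := E \setminus \{e^k\}$. These are non-empty subsets of $S$ (because $|E| \ge 2$), and they are pairwise distinct (because the $e^k$'s are). The heart of the proof is the claim that $EX_kE = E$ for every $k \ge 1$. The inclusion $EX_kE \subseteq E$ follows routinely from $X_k \subseteq E$ and the elementary identity $E^n = E$ for all $n \ge 1$ (an easy induction from $E^2 = E$). For the reverse inclusion, I would take $f \in E$, use $E = E^4$ to write $f = h_1 h_2 h_3 h_4$ with $h_i \in E$, and exhibit a factorization $f = e_1 x e_2$ with $e_1, e_2 \in E$ and $x \in X_k$ via a three-case split: if $h_2 \ne e^k$, take $(e_1, x, e_2) = (h_1, h_2, h_3 h_4)$; else if $h_3 \ne e^k$, take $(h_1 h_2, h_3, h_4)$; and otherwise $h_2 = h_3 = e^k$, in which case take $(h_1, h_2 h_3, h_4)$, noting that $h_2 h_3 = e^{2k} \ne e^k$ by the infinite order of $e$.

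The main obstacle is precisely the $\supseteq$ direction in the claim: a single three-fold decomposition $f = g_1 g_2 g_3 \in E^3$ may be forced to have $g_2 = e^k$ in the middle slot, leaving no obvious way to avoid the forbidden element. The key move is to pass to a four-fold decomposition $f = h_1 h_2 h_3 h_4$, granted by $E = E^4$: this creates two interior slots $h_2, h_3$, and crucially also allows coalescing them into the single element $h_2 h_3 \in E$, yielding three candidates for the ``middle'' entry of a three-fold factorization; torsion-freeness of $S$ then guarantees that at least one of $h_2, h_3, h_2 h_3$ differs from $e^k$. Once the claim is established, $\{X_k : k \ge 1\}$ furnishes infinitely many solutions to $EXE = E$, completing the argument.
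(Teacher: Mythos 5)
Your proof is correct and follows essentially the same strategy as the paper: both arguments produce infinitely many solutions of the form $E \setminus \{p\}$ for a single deleted point $p$, and both verify $E(E\setminus\{p\})E = E$ by re-associating a longer factorization of each $f \in E$ so that the middle factor dodges $p$, with torsion-freeness ruling out the one obstruction. The only (immaterial) differences are that you delete the powers $e^k$ of a fixed infinite-order element and work with $E = E^4$, whereas the paper deletes an arbitrary $a \in E \setminus \{e\}$, works with $E = E^3$, and handles the bad case by factoring $a = bc$ inside $E$ with $b \ne a$ or $c \ne a$.
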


\begin{proof}
Let $e$ be the identity of a (conditional) unitization of $\widehat{S}$: if $S$ is already a monoid, then $S = \widehat{S}$ and $e = 1_S$; otherwise, $e$ is an element not in $S$ and the multiplication of $S$ is extended to the set $\widehat{S} := S \cup \allowbreak \{e\}$ by requiring that $ex := x =: xe$ for all $x \in \widehat{S}$. 
Accordingly, assume that either $S \ne \widehat{S}$ or $E \ne \{e\}$. We have to prove that the equation $EXE = E$ has infinitely many solutions $X \in \mathcal P(S)$. 

Considering that $S$ is torsion-free and $E$ is an idempotent of $\mathcal P(S)$, it follows from Prop\-o\-si\-tion \ref{prop:S-torsion-free-iff-Pfin(S)} that $|E| = \infty$. We claim that, for each $a \in E \setminus \{e\}$, the (non-empty) set $X_a := \allowbreak E \setminus \{a\} \subseteq S$ is a solution to the equation $EXE = E$. This will clearly complete the proof, as $X_a \ne X_b$ for all distinct $a, b \in E$.

Fix $a \in E \setminus \{e\}$. Since $E = E^n$ for all $n \in \mathbb N^+$ (by the idempotency of $E$), we have $EX_aE \subseteq \allowbreak E^3 = \allowbreak E$. To prove the reverse inclusion, let $x \in E$. Then $x \in E^3$, so $x = uvw$ for some $u, \allowbreak v, \allowbreak w \in \allowbreak E$. If $v \ne a$, then $uvw \in EX_aE$. Otherwise, it follows from $E = E^2$ that $a = bc$ for some $b, c \in E$; moreover, either $a \ne b$ or $a \ne c$, because $a = b = c$ would yield $e \ne a = a^2$ (contradicting that $S$ is torsion-free). If $a \ne \allowbreak b$, then $x = uaw = ub(cw) \in EX_aE^2 = EX_a E$. The case $a \ne c$ is symmetric, and thus we are done.
\end{proof}

We are now ready for the main result of the section. For ease of exposition, we will say that an idem\-potent of a monoid is \evid{non-trivial} if it is not the identity element.

\begin{theorem}
\label{thm:torsion-free-monoids-are-globally-closed}
The class of torsion-free monoids is globally closed.
\end{theorem}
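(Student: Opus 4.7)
My strategy is to argue by contradiction. By \cite[Lemma 1.1]{Gou-Isk-Tsi-1984}, any semigroup globally isomorphic to a monoid is itself a monoid, so $K$ is a monoid, and it suffices to show $K$ is torsion-free. Suppose otherwise: there exist $y \in K \setminus \{1_K\}$ and $1 \le m < n$ with $y^m = y^n$. Set $Y := f^{-1}(\{y\})$; then $Y \ne \{1_H\}$ and $Y^m = Y^n$, so the cyclic subsemigroup generated by $Y$ in $\mathcal P(H)$ is finite and contains a unique idempotent $E := Y^k$ for some $k \ge 1$.

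If $E = \{1_H\}$, then $Y^k = \{1_H\}$ implies that every $x \in Y$ satisfies $x^k = 1_H$; the torsion-freeness of $H$ then forces $x = 1_H$, and therefore $Y = \{1_H\}$, a contradiction. So I may assume $E$ is a non-trivial idempotent of $\mathcal P(H)$, in which case $\{y^k\} = f(E)$ is a non-trivial idempotent of $\mathcal P(K)$ and $e := y^k$ is an idempotent of $K$ distinct from $1_K$. Here my plan is to set up a contradiction against Proposition \ref{prop:equation-with-idemps-in-torsion-free-case} by using not the ``obvious'' witness $\{e\}$ but the two-element set $\hat E' := \{1_K, e\}$.

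A direct check gives $\hat E'^{\,2} = \{1_K, e, e, e^2\} = \{1_K, e\} = \hat E'$, so $\hat E'$ is a non-trivial idempotent of $\mathcal P(K)$; hence $E' := f^{-1}(\hat E')$ is a non-trivial idempotent of $\mathcal P(H)$, and Proposition \ref{prop:equation-with-idemps-in-torsion-free-case} provides infinitely many solutions $X \in \mathcal P(H)$ to $E' X E' = E'$. Transporting along $f$, the equation $\hat E' W \hat E' = \hat E'$ must therefore admit infinitely many solutions $W \in \mathcal P(K)$. But expanding the left-hand side as $\bigcup_{w \in W}\{w, we, ew, ewe\}$ and imposing equality with $\{1_K, e\}$ forces $W \subseteq \{1_K, e\}$ (so every $w \in W$ contributes only $1_K$ or $e$) and $1_K \in W$ (to cover $1_K$ on the right), leaving exactly the two solutions $W = \{1_K\}$ and $W = \{1_K, e\}$. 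This contradicts the ``infinitely many solutions'' conclusion, closing the proof.

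The main obstacle is the choice of witness idempotent. The naive candidate $\{e\}$ does not suffice: its stabilizer equation is solved by any non-empty subset of $\{z \in K : eze = e\}$, a set that may well be infinite, so its solution count is compatible with the conclusion of Proposition \ref{prop:equation-with-idemps-in-torsion-free-case} and no contradiction arises. The combinatorial key is to rigidify the situation by enlarging $\{e\}$ to $\{1_K, e\}$: this preserves idempotency (since $e^2 = e$) and non-triviality (since $e \ne 1_K$), but slashes the number of solutions of the associated equation down to exactly two.
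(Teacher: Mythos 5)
Your proof is correct and follows essentially the same route as the paper's: both pull back a non-trivial \emph{finite} idempotent of $\mathcal P(K)$ containing $1_K$ to a non-trivial idempotent of $\mathcal P(H)$, invoke Proposition \ref{prop:equation-with-idemps-in-torsion-free-case} to obtain infinitely many solutions of $E'XE' = E'$, and derive a contradiction from the fact that any solution $W$ of $\hat E' W \hat E' = \hat E'$ satisfies $W = 1_K W 1_K \subseteq \hat E'$ and is thus one of finitely many subsets of a finite set. The only (immaterial) difference is the choice of witness idempotent: the paper takes $B = \{1_K, b\}^{n-1}$ directly, whereas you take $\{1_K, e\}$ with $e$ the idempotent power of $y$, using a short detour through $H$ to verify $e \ne 1_K$.
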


\begin{proof}
Let $f$ be a global isomorphism from a torsion-free monoid $H$ to a semigroup $K$. By \cite[Lemma 1.1]{Gou-Isk-Tsi-1984}, $K$ is itself a monoid. Seeking a contradiction, assume that $K$ is not torsion-free, i.e., $K$ has at least one non-identity element $b$ such that $b^m = b^n$ for some $m, n \in \mathbb N^+$ with $m < n$. It is then clear that $b^k \in \allowbreak \{1_K, \allowbreak b, \ldots, \allowbreak b^{n-1}\}$ for all $k \in \mathbb N$, and hence 
$B := \{1_K, b\}^{n-1} \ne \{1_K\}$
is a non-trivial idempotent of $\mathcal P(K)$.

Since $f$ is an isomorphism from $\mathcal P(H)$ to $\mathcal P(K)$, it follows (in view of Remark \ref{remarks:units-and-unit-stability}\ref{remarks:units-and-unit-stability(3)}) that $A := f^{-1}(B)$ is a non-trivial idempotent of $\mathcal P(H)$. So, by Proposition \ref{prop:equation-with-idemps-in-torsion-free-case} and the torsion-freeness of $H$, the equation $AXA = A$ has infinitely many solutions $X$ in $\mathcal P(H)$. This implies that
$$
B = f(A) = f(AXA) = f(A) f(X) f(A) = B f(X) B
$$
for infinitely many $X \in \mathcal P(H)$, 
which, by the injectivity of $f$, means that the equation $B = BYB$ has infinitely many solutions $Y$ over $\mathcal P(K)$. However, if $BYB = B$ for some $Y \in \mathcal P(K)$, then $Y = \allowbreak 1_K Y 1_K \subseteq \allowbreak BYB = \allowbreak B$, that is, $Y$ is a subset of a finite set (and a finite set has finitely many subsets). We have therefore reached a contradiction, and the proof is complete.
\end{proof}

\begin{comment}
\begin{remark}
A cancellative monoid need not be torsion-free. For instance, finite groups are cancellative but not torsion-free.
\end{remark}

\begin{lemma}
If $f$ is a global isomorphism from a cancellative monoid $H$ to a monoid $K$, then every non-unit of $K$ has infinite order.
\end{lemma}

\begin{proof}
Suppose to the contrary that $K$ has a non-unit $y$ of finite order. We may assume that $y = y^2$; otherwise, we can find a positive integer $k$ such that $y^k = y^{2k}$ and replace $y$ with $y^k$, upon considering that $y^k$ is a non-unit (or else $y$ would be a unit). Set $X := f^{-1}(\{1_K, y\})$. Since $\{1_K\} \ne \{1_K, y\} = \{1_K, y\}^2$, $X$ must be a non-identity idempotent of $\mathcal P(H)$. 

We claim $X \subseteq H^\times$. In fact, assume to the contrary that $X$ contains a non-unit $x$. Then $x^n \in X^n = X$ for all $n \in \mathbb N^+$ (because $X = X^2$ and hence $X = X^n$). However, every non-unit of $H$ has infinite order, by the fact that $H$ is cancellative and every finite-order elements of a cancellative monoid is a unit (even if not every unit need have finite order). It follows that $x, x^2, \ldots$ are pairwise distinct elements in $X$, with the result that $|X| = \infty$ (which is a contradiction). 

So, every element of $X$ is a unit. But we proved in Theorem 2.3(i) of our paper that $f[\mathcal P(H^\times)] = \mathcal P(K^\times)$. Hence, $\{1_K, y\} = f(X) \in \mathcal P(K^\times)$, which is impossible since $y$ is a non-unit of $K$.
\end{proof}
\end{comment}
\section{An interlude in additive number theory}
\label{sec:additive_NT}

The present section contains several results of a combinatorial nature that serve as preliminaries for the proof of Theorem \ref{thm:numerical-monoids-are-globally-closed}. We begin with a couple of elementary lemmas that are certainly well known; as we have not been able to find a reference, we include their proofs here for completeness.

\begin{lemma}
\label{lem:n-fold-sum-of-AP}
Let $k$ be a positive integer and $A$ be a subset of $\mathbb N$ containing $0$. If $X := A + \dil{k}{\mathbb N}$, then $nX = (n+1)X$ for all large $n \in \mathbb N$.
\end{lemma}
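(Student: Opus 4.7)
My plan is to reduce the problem modulo $k$ and then run a short monotonicity-plus-finiteness argument. Using the distributivity $n(A+B) = nA + nB$ of sumsets, together with the identity $n(\dil{k}{\mathbb{N}}) = \dil{k}{\mathbb{N}}$ recorded in the introduction, I would first rewrite
\[
nX \,=\, n\bigl(A + \dil{k}{\mathbb{N}}\bigr) \,=\, nA + \dil{k}{\mathbb{N}}.
\]
Since $0 \in \dil{k}{\mathbb{N}}$, the inclusion $nX \subseteq (n+1)X$ is automatic, so the content of the lemma is the reverse inclusion for $n$ large.

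Next, for each residue $r \in \llb 0, k-1 \rrb$, I would set
\[
\mu_n(r) \,:=\, \min\bigl\{s \in nA : s \equiv r \pmod{k}\bigr\} \,\in\, \mathbb{N} \cup \{+\infty\},
\]
with the convention that $\mu_n(r) = +\infty$ when the set on the right is empty, and verify the decomposition
\[
nX \,=\, \bigcup_{\substack{r \in \llb 0, k-1\rrb \\ \mu_n(r) < \infty}} \bigl(\mu_n(r) + \dil{k}{\mathbb{N}}\bigr).
\]
For ``$\subseteq$'', any $y \in nX$ can be written as $s + kj$ with $s \in nA$ and $j \in \mathbb{N}$; letting $r$ be its residue mod $k$, we have $s \geq \mu_n(r)$ and $y \equiv \mu_n(r) \pmod{k}$, so $y \in \mu_n(r) + \dil{k}{\mathbb{N}}$. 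The reverse inclusion is immediate since $\mu_n(r) \in nA$ whenever $\mu_n(r) < \infty$.

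Finally, because $0 \in A$, appending a zero to an $n$-tuple shows $nA \subseteq (n+1)A$, whence $\mu_{n+1}(r) \leq \mu_n(r)$ for every $r$. Thus each $\mu_n(r)$, once finite, stays finite, and the resulting non-increasing sequence of non-negative integers must eventually be constant. Since only finitely many residues are in play, one can choose $n_0$ beyond which every $\mu_n(r)$ has reached its eventual value, at which point the displayed expression for $nX$ is independent of $n$, yielding $nX = (n+1)X$ for all $n \geq n_0$. The argument is essentially bookkeeping; the one step deserving genuine care is the residue-wise description of $nX$. The main insight is that reducing modulo $k$ converts the problem into a finite collection of one-dimensional arithmetic-progression problems, each controlled by a bounded monotone sequence.
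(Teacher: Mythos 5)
Your proof is correct and follows essentially the same route as the paper's: both rewrite $nX$ as $nA + \dil{k}{\mathbb{N}}$, track the residue-wise minima of $nA$ modulo $k$, and conclude from the monotonicity of these finitely many non-increasing sequences of non-negative integers that the decomposition stabilizes. The only cosmetic difference is your use of the convention $\mu_n(r) = +\infty$ for unrepresented residues, where the paper instead restricts attention to the (eventually stable) set $R_n$ of represented residues.
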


\begin{proof}
Given $n \in \mathbb N^+$, let $R_n$ be the set of residues $r \in \llb 0, k-1 \rrb$ such that $a \equiv r \bmod k$ for some $a \in \allowbreak nA$; and for each $r \in R_n$, let $a_{n,r}$ be the smallest element in $nA$ congruent to $r$ modulo $k$. 
Considering that $n(\dil{k}{\mathbb N}) = \dil{k}{\mathbb N}$ and taking $A_n := \allowbreak \{a_{n,r} \colon r \in R_n\}$, it is readily seen that
\begin{equation}
\label{lem:n-fold-sum-of-AP:equ(1)}
nX = nA + n (\dil{k}{\mathbb N}) = A_n + k \times \mathbb N.
\end{equation}
On the other hand, $0 \in A$ (by hypothesis) yields $nA \subseteq (n+1)A$ and hence $R_n \subseteq R_
{n+1} \subseteq \llb 0, k-1 \rrb$. It follows that (i) $0 \le a_{n+1,r} \le a_{n,r}$ for every $r \in R_n$ and (ii) $R_n = R_{n+1}$ for all but finitely many values of $n$.
Since there is no strictly descending (infinite) sequence of non-negative integers, we conclude that $A_n = \allowbreak A_{n+1}$ for any sufficiently large $n$, which implies by Eq.~\eqref{lem:n-fold-sum-of-AP:equ(1)} that 
$
nX = A_{n+1} + k \times \allowbreak \mathbb N = (n+1) X$.
\end{proof}

\begin{lemma}
\label{lem:torsion-free-implies-aperiodic}
Let $X$ be a subset of $\mathbb N$, and suppose that $hX = kX$ for some $h, k \in \mathbb N$ with $h \ne k$. Then $nX = (n+1)X$ for every large $n \in \mathbb N$.
\end{lemma}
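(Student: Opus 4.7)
My plan is to reduce the problem to the non-trivial case $1 \le h < k$ with $X$ non-empty, and then exploit the fact that $0$ must belong to $X$.

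First, I would dispose of the degenerate cases. If $X = \emptyset$, then $nX = \emptyset$ for every $n \ge 1$ and the conclusion is trivial (the hypothesis forces $h, k \ge 1$, since $0 \cdot \emptyset = \{0\} \ne \emptyset$). If $h = 0$ (swap $h,k$ if necessary so that $h < k$), then the hypothesis becomes $\{0\} = kX$, which forces $X = \{0\}$; again $nX = \{0\}$ for every $n$.

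Next, assuming without loss of generality $1 \le h < k$ and $X \ne \emptyset$, the key observation is the simple identity $\min(nX) = n \cdot \min(X)$ for every $n \in \mathbb{N}^+$, valid for any non-empty subset of $\mathbb{N}$. Applying this to both sides of $hX = kX$ gives $h \cdot \min(X) = k \cdot \min(X)$, and since $h \ne k$ this forces $\min(X) = 0$, i.e., $0 \in X$.

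With $0 \in X$ in hand, the sequence $(nX)_{n \ge 1}$ is non-decreasing: indeed, $nX = nX + \{0\} \subseteq nX + X = (n+1)X$. Combining this with $hX = kX$, we obtain the sandwich
\[
hX \subseteq (h+1)X \subseteq \cdots \subseteq kX = hX,
\]
so all intermediate sets coincide; in particular $hX = (h+1)X$. A straightforward induction on $n \ge h$ (using $(n+1)X = nX + X$ and the base case just established) then yields $nX = (n+1)X$ for every $n \ge h$, which proves the claim.

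There is no substantial obstacle: the argument is essentially a one-line remark once one notices that $\min(nX) = n \min(X)$, and the only minor care required is to separate out the degenerate cases $X = \emptyset$ and $h = 0$.
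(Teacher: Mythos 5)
Your argument is correct and essentially identical to the paper's: both reduce to the case $0 = \min X \in X$ via $\min(nX) = n\min X$, then sandwich $hX \subseteq (h+1)X \subseteq kX = hX$ and induct. The extra care you take with the cases $X = \emptyset$ and $h = 0$ is fine but not substantively different.
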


\begin{proof}
Assume without loss of generality that $h < k$. If $X = \emptyset$, then $nX = \emptyset$ for all $n \in \mathbb N$ (and we are done). Otherwise, $hX = kX$ yields $h \min X = k \min X$ and hence $0 = \min X \in X$. It is then clear that 
\[
hX = hX + 0 \subseteq hX + X \subseteq hX + (k-h)X = kX = hX.
\]
This leads to $hX = (h+1)X$, which, by a routine induction, implies $nX = (n+1)X$ for any $n \ge h$.
\end{proof}

Among other things, we will make use of a classical theorem of Kneser \cite[Theorem I.17$^\prime$]{Halb-Roth-1967} on the ``structure'' of a bounded-from-below set $X$ of integers whose \evid{lower asymptotic density}
\begin{equation}
\label{equ:def-of-lower-asymptotic-density}
\mathsf{d}_\ast(X) := \liminf_{n \to \infty} \frac{\bigl|X \cap \llb 1, n \rrb\bigr|}{n}
\end{equation}
is not ``too small''. (Note how the limit in Eq.~\eqref{equ:def-of-lower-asymptotic-density} only takes into account the \textit{positive} elements of $X$, consistently with Halberstam and Roth's definition from loc.~cit., p.~xvii).
For the reader's convenience, we restate Kneser's theorem here in a form that is best suited to our purposes.

\begin{theorem}[Kneser's theorem]
\label{thm:kneser-thm}
Let $X$ be a subset of $\mathbb N$. If $\mathrm{d}_*(2X) < 2\mathrm{d}_*(X)$, then there exist an integer $m \ge 1$ and a set $A \subseteq \llb 0, m-1 \rrb$ such that $X$ is contained in the set $Y := A + \dil{m}{\mathbb N}$ and the difference $2Y \setminus 2X$ is finite.
\end{theorem}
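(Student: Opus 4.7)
The plan is to deduce the density statement from the classical group-theoretic form of Kneser's theorem --- for finite subsets $F, G$ of an abelian group, either $|F+G| \ge |F|+|G|-1$, or there is a proper subgroup $H$ such that $F+G$ is a union of $H$-cosets and $|F+G| \ge |F+H|+|G+H|-|H|$ --- applied to finite truncations of $X$, together with a compactness-style argument to extract a single modulus $m$ that governs the global structure of $X$.

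First, I would translate so that $0 \in X$: both the hypothesis $\dinf(2X) < 2\dinf(X)$ and the form of the conclusion are invariant under integer translation, up to relabeling $A$ and enlarging the finite exceptional set by finitely many points. Note that the strict inequality forces $\dinf(X) > 0$, since otherwise the right-hand side vanishes while the left-hand side is non-negative. Next, I would pick a subsequence $(N_j)_{j \ge 1}$ along which $|X \cap \llb 1, N_j \rrb|/N_j \to \dinf(X)$ and $|2X \cap \llb 1, N_j \rrb|/N_j \to \dinf(2X)$, and apply the finite Kneser theorem to each $X_{N_j} := X \cap \llb 0, N_j \rrb$ inside a cyclic group $\mathbb Z/M_j\mathbb Z$ with $M_j$ much larger than $2N_j$ (so that the embedding does not create spurious sums). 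The density gap $2\dinf(X) - \dinf(2X) > 0$ excludes the trivial alternative $|2X_{N_j}| \ge 2|X_{N_j}| - 1$ for all sufficiently large $j$; hence there must exist a period $m_j \ge 1$ such that $2X_{N_j}$ is, up to a deficit controlled by the gap, a union of cosets modulo $m_j$.

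A standard diagonal extraction then shows that the $m_j$ may be taken constant, say $m_j = m$, along a further subsequence. Setting $A := \{x \bmod m : x \in X\} \cap \llb 0, m-1 \rrb$ and $Y := A + \dil{m}{\mathbb N}$, we have $X \subseteq Y$ by construction; and each residue class modulo $m$ that occurs in $2Y$ is, by the Kneser structure at every sufficiently large scale $N_j$, covered by $2X$ beyond a uniform threshold, which forces $2Y \setminus 2X$ to be finite. The main obstacle is precisely this stabilization step: ensuring that the local moduli $m_j$ remain bounded in $j$, so that a single $m$ can be pulled out. It is here that the strict inequality $\dinf(2X) < 2\dinf(X)$ becomes indispensable, as it supplies the quantitative slack needed to prevent the periods from drifting with the truncation window and to turn the per-scale Kneser estimate into a global statement about $X$.
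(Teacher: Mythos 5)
The paper does not actually prove this statement: it is quoted (in adapted form) from Halberstam and Roth, Theorem I.17$^\prime$, i.e., it is Kneser's 1953 density theorem, a deep result whose proof occupies a substantial part of Chapter I of that book. Your plan to derive it from the finite (cyclic-group) addition theorem by truncation breaks down at the very step where the hypothesis is supposed to enter. If $X_N := X \cap \llb 0, N\rrb$, then $2X_N$ lives in $\llb 0, 2N\rrb$, so the best upper bound the hypothesis can give (along a subsequence realizing $\dinf(2X)$) is $|2X_N| \le |2X \cap \llb 0, 2N\rrb| \approx 2\,\dinf(2X)\,N$, while the lower bound is $|X_N| \gtrsim \dinf(X)\,N$. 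The inequality $\dinf(2X) < 2\dinf(X)$ therefore only yields $|2X_N| \lesssim 4\,|X_N|$, which comes nowhere near contradicting the ``trivial'' alternative $|2X_N| \ge 2|X_N| - 1$ of the finite Kneser theorem; that alternative is \emph{not} excluded, and the whole case analysis collapses. This factor-of-two dilution --- the sumset of a truncation occupies an interval twice as long as the truncation itself --- is precisely the obstruction that makes density sumset theorems (Mann, Kneser) genuinely harder than their cardinality counterparts, and no choice of subsequence or of the ambient modulus $M_j$ removes it.

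Even granting the periodic alternative at every scale, the remaining steps are asserted rather than proved: you give no bound on the moduli $m_j$ (a ``standard diagonal extraction'' only applies once the $m_j$ are known to range over a finite set), and the final claim that every residue class occurring in $2Y$ is covered by $2X$ beyond a uniform threshold does not follow from per-scale coset structure without a quantitative argument tying the deficits at different scales together --- you name this as the main obstacle but then simply assert that the strict inequality ``supplies the slack,'' which is not a proof. The honest course here is to do what the paper does, namely cite the theorem, or else to reproduce the actual argument from Halberstam--Roth, which proceeds by a direct induction using Kneser's transformation at the level of densities rather than by truncation to finite groups.
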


In fact, we will apply Kneser's theorem right away to prove Proposition \ref{prop:nX=2nX} below, which is in turn a key ingredient in the proof of Theorem \ref{thm:numerical-monoids-are-globally-closed}. But first, we need another lemma.

\begin{lemma}
\label{lem:kneser-thm-application}
If $0 \in X \subseteq \mathbb N$ and $\mathrm{d}_\ast(2X) < 2 \mathrm{d}_\ast(X)$, then $nX = (n+1)X$ for all large $n \in \mathbb N$.
\end{lemma}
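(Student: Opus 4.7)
The plan is to invoke Kneser's theorem (Theorem~\ref{thm:kneser-thm}) to embed $X$ inside a periodic ``overset'' whose iterated sumsets stabilize, and then to control the shortfall $nY \setminus nX$ by means of a swap/maximality argument. Specifically, the hypothesis $\mathrm{d}_\ast(2X) < 2\mathrm{d}_\ast(X)$ supplies an integer $m \ge 1$ and a set $A \subseteq \llb 0, m-1 \rrb$ with $X \subseteq Y := A + m \times \mathbb N$ and $F := 2Y \setminus 2X$ finite. Since $0 \in X \subseteq Y$ and $A \subseteq \llb 0, m-1 \rrb$, we must have $0 \in A$, so Lemma~\ref{lem:n-fold-sum-of-AP} applies to $Y$ and produces an index $N_0 \ge 2$ such that $nY$ equals some fixed set $Y_\infty$ for every $n \ge N_0$. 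The assumption $0 \in X$ also gives $nX \subseteq (n+1)X$ for all $n$ (by padding representations with zeros).

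The heart of the argument is a claim that $G_n := nY \setminus nX$ is finite for every $n \ge 2$. I would prove this by fixing $z \in nY \setminus nX$ and choosing a representation $z = y_1 + \cdots + y_n$ with $y_\ell \in Y$ so as to maximize $k := |\{\ell : y_\ell \in X\}|$. Since $z \notin nX$, we have $k < n$, so some index $i$ satisfies $y_i \in Y \setminus X$. For any $j \ne i$, the sum $y_i + y_j$ lies in $2Y$; if it also lay in $2X$, one could write $y_i + y_j = u + v$ with $u, v \in X$ and substitute $(u, v)$ for $(y_i, y_j)$, obtaining a representation of $z$ with strictly more $X$-entries and contradicting the maximality of $k$. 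Hence $y_i + y_j \in F$ for every $j \ne i$, and since $Y \subseteq \mathbb N$, it follows that $y_\ell \le \max F$ for every $\ell$, whence $z \le n \max F$. This shows $G_n \subseteq \llb 0, n \max F \rrb$, and so $G_n$ is finite.

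To conclude, I observe that $(G_n)_{n \ge N_0}$ is a non-increasing sequence of finite subsets of $\mathbb N$: indeed, $nY = (n+1)Y = Y_\infty$ for $n \ge N_0$, while $nX \subseteq (n+1)X$ always. Consequently $|G_n|$ is a non-increasing sequence of non-negative integers, hence is eventually constant, and then so is $G_n$ itself. Since $nX = Y_\infty \setminus G_n$ for $n \ge N_0$, this yields $nX = (n+1)X$ for all sufficiently large $n$, as required. The step I expect to require the most care is the swap argument for finiteness of $G_n$: the two subcases $y_j \in X$ and $y_j \in Y \setminus X$ must both be shown to produce strictly better decompositions (converting one or two ``bad'' entries into good ones, respectively), and the uniform bound $y_\ell \le \max F$ has to be harvested from the resulting relation $y_i + y_j \in F$ valid for every $j \ne i$.
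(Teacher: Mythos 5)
Your proof is correct, and after the common opening move (invoking Kneser's theorem to produce $Y := A + m\times\mathbb N \supseteq X$ with $F := 2Y \setminus 2X$ finite) it genuinely diverges from the paper's. The paper stays with $2X$ itself: it upgrades Kneser's conclusion to an exact description $2X = C + mk\times\mathbb N$ with $0 \in C$ (absorbing the finitely many missing elements of $2Y$ into the set of coset representatives), applies Lemma~\ref{lem:n-fold-sum-of-AP} to the set $2X$ to get $2nX = (2n+2)X$ for large $n$, and then needs Lemma~\ref{lem:torsion-free-implies-aperiodic} to convert this doubled-index identity into $nX=(n+1)X$. You instead apply Lemma~\ref{lem:n-fold-sum-of-AP} to $Y$ and control the deficiency $G_n := nY\setminus nX$ directly; your swap/maximality argument establishes the stronger intermediate fact that finiteness of $2Y\setminus 2X$ propagates to finiteness of $nY\setminus nX$ for every $n\ge 2$, and the monotonicity of $(G_n)_{n\ge N_0}$ then forces stabilization of $nX$ outright. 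Your route costs an extra combinatorial lemma (the swap argument, which I have checked: replacing $(y_i,y_j)$ by $(u,v)\in X\times X$ does strictly increase the count of $X$-entries in both subcases, and the bound $y_\ell\le\max F$ for all $\ell$ does follow since $n\ge 2$ guarantees at least one $j\ne i$), but it dispenses with Lemma~\ref{lem:torsion-free-implies-aperiodic} and with the explicit computation of the set $C$. Two trivial points deserve a sentence in a polished write-up: if $F=\emptyset$ the swap argument shows directly that $G_n=\emptyset$ for $n\ge 2$, so $\max F$ is never taken of the empty set; and the maximum of the number of $X$-entries over all representations exists because that number is an integer in $\llb 0,n\rrb$.
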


\begin{proof}
By Theorem \ref{thm:kneser-thm} (applied to $X$), there exist $m \in \mathbb N^+$ and $A \subseteq \llb 0, m-1 \rrb$ such that 
\begin{equation}
\label{lem:kneser-thm-application:equ(01)}
X \subseteq Y := A + m \times \mathbb N
\end{equation}
and
\begin{equation}
\label{lem:kneser-thm-application:equ(02)}
2X \cap \llb k, \infty \rrb = 2Y \cap \llb k, \infty \rrb,
\qquad \text{for some } k \in \mathbb N.
\end{equation}
By the hypothesis that $0 \in X$, Eq.~\eqref{lem:kneser-thm-application:equ(01)} 
implies $0 \in A$ and $0 \in 2X \subseteq 2Y$. Hence, by Eq.~\eqref{lem:kneser-thm-application:equ(02)},
\begin{equation}
\label{lem:kneser-thm-application:equ(03)}
2X = (2Y) \setminus Q,
\qquad\text{for a certain }
Q \subseteq \allowbreak \llb 1, k - \allowbreak 1 \rrb. 
\end{equation}
On the other hand, it is straightforward that $\mathbb N = k \times \mathbb N + \llb 0, k-1 \rrb$; in addition, $m \times (S + \allowbreak T) = m \times \allowbreak S + \allowbreak m \times T$ for all $S, T \subseteq \mathbb N$. So, setting $B := 2A + m \times \llb 0, k-1 \rrb$, we obtain
\[
2Y = 2A + 2(m \times \mathbb N) = 2A + m \times \mathbb N = (2A + m \times \llb 0, k-1 \rrb) + m \times (k \times \mathbb N) = B + mk \times \mathbb N.
\]
Since $Q \subseteq \llb 1, k-1 \rrb$ and $\sup Q < k \le mk$ (with $\sup \emptyset := 0$), it is then clear that $Q \subseteq B$. It follows, by Eq.~\eqref{lem:kneser-thm-application:equ(03)} and a standard double inclusion, that 
\[
2X = (B + mk \times \mathbb N) \setminus Q = C + mk \times \mathbb N,
\qquad\text{where }
C := (B \setminus Q) \cup (B + mk \times \mathbb N^+). 
\]
Given that $0 \in A \subseteq B$ and hence $0 \in C$, we thus infer from Lemma \ref{lem:n-fold-sum-of-AP} (applied to the set $2X$) that $2nX = \allowbreak 2(n+1)X$ for every large $n \in \mathbb N$, which, by Lemma \ref{lem:torsion-free-implies-aperiodic}, finishes the proof.
\begin{comment}
\begin{equation}
\label{lem:kneser-thm-application:equ(03)}
%nX \subseteq (n+1)X
%\qquad\text{and}\qquad
nY = (n+1)Y,
\qquad\text{for every sufficiently large }
n \in \mathbb N.
\end{equation}
Since, on the other hand, $hX \subseteq (h+1)X$ for all $h \in \mathbb N$ (by the fact that $0 \in X$), we find that, for all large enough $h \in \mathbb N^+$ (at least as large as $n$),
\[
nY \cap \llb m, \infty \rrb \subseteq hX \subseteq (h+1)X \subseteq (h+1)Y = hY = nY.
\]
(we know that $nY = (n+1)Y$ and hence $hY = nY$ for all large $h \in \mathbb N$, this explain the last equality.) This implies that the sets $X, 2X, \ldots$ can only differ, from some point, by finitely many elements in the interval $\llb 0, m-1 \rrb$. Since the sequence $X, 2X, \ldots$ is monotone non-decreasing with respect to the inclusion order, this is only possible if there exists $h_0 \in \mathbb N^+$ such that $(h+1)X = hX$ for every integer $h \ge h_0$, which finishes the proof. 
\end{comment}
\end{proof}

\begin{comment}
\begin{lemma}
\label{dinf-doubling}
If $X$ is a subset of $\mathbb N$ such that $\dinf(X) > 0$, then $\dinf(2^{r+1}X) <  2\dinf(2^{r}X)$ for some $r \in \mathbb N$.
\end{lemma}

\begin{proof}
Assume for a contradiction that $\dinf(2^{r+1} X) \ge 2\dinf(2^r X)$ for all $r \in \mathbb N$. Then, a routine induction shows that $\dinf(2^r X) \ge 2^r \dinf(X)$ for every $r \in \mathbb N$. Since $\dinf(X) > 0$, this implies that $\dinf(2^r X) > 1$ for a sufficiently large $r \in \mathbb N$, which is however impossible because $0 \le \dinf(Y) \le 1$ for any set $Y \subseteq \mathbb N$.
\end{proof}
\end{comment}

\begin{proposition}
\label{prop:nX=2nX}
Let $X$ be a subset of $\mathbb N$ containing $0$, and suppose that 
\begin{equation}
\label{lem:nX=2nX:equ(01)}
X + \{0, u\} = X + \{0, v\}
\end{equation}
for some $u, v \in \mathbb N$ with $u \ne v$. Then $nX = (n+1)X$ for every large $n \in \mathbb N$.
\end{proposition}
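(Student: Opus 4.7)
The plan is to apply Lemma~\ref{lem:kneser-thm-application} not to $X$ directly but to a suitable iterate $n_0 X$, and to clean up with Lemma~\ref{lem:torsion-free-implies-aperiodic}. The hypothesis $X + \{0, u\} = X + \{0, v\}$ is used only once, to secure the positivity $\dinf(X) > 0$ via a chain construction.

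I would assume without loss of generality that $u < v$ and set $d := v - u \ge 1$. The inclusion $X + v \subseteq X \cup (X + u)$ (immediate from the hypothesis) means that every $x \in X$ satisfies $x + v \in X$ or $x + d \in X$. Starting at $0 \in X$ and iterating this dichotomy, I obtain an infinite strictly increasing chain $0 = x_0 < x_1 < x_2 < \cdots$ in $X$ with $x_{i+1} - x_i \in \{d, v\}$ for every $i$. Since $x_n \le nv$, this gives $|X \cap \llb 1, N \rrb| \ge \lfloor N/v \rfloor$ for every $N \in \mathbb N^+$, and therefore $\dinf(X) \ge 1/v > 0$.

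The next step is a density-doubling obstruction. Were it the case that $\dinf(2 \cdot 2^k X) \ge 2\, \dinf(2^k X)$ for \emph{every} $k \in \mathbb N$, induction on $k$ would yield $\dinf(2^k X) \ge 2^k \dinf(X)$, which exceeds $1$ for $k$ large---a contradiction since $\dinf(Y) \le 1$ for every $Y \subseteq \mathbb N$. So there exists $k_0 \in \mathbb N$ with $\dinf(2 n_0 X) < 2\, \dinf(n_0 X)$, where $n_0 := 2^{k_0}$; and since $0 \in n_0 X$, Lemma~\ref{lem:kneser-thm-application} applied to $n_0 X$ yields $m(n_0 X) = (m+1)(n_0 X)$, equivalently $m n_0 X = (m+1) n_0 X$, for all sufficiently large $m \in \mathbb N$.

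Fixing such an $m$ and setting $h := m n_0$ and $k := (m+1) n_0$, we have $h \ne k$ and $hX = kX$, so Lemma~\ref{lem:torsion-free-implies-aperiodic} applied to $X$ gives $nX = (n+1)X$ for every sufficiently large $n \in \mathbb N$, as desired. The main obstacle is the chain construction of the second paragraph: it is the unique place where the hypothesis $u \ne v$ (via $d \ge 1$) is genuinely needed, and without the resulting lower bound on $\dinf(X)$ the doubling obstruction would collapse.
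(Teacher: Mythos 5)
Your proof is correct, and its overall architecture coincides with the paper's: establish $\dinf(X) > 0$, derive the existence of some $r$ with $\dinf(2^{r+1}X) < 2\,\dinf(2^r X)$ from the fact that densities cannot exceed $1$, apply Lemma~\ref{lem:kneser-thm-application} to $2^r X$, and conclude via Lemma~\ref{lem:torsion-free-implies-aperiodic}. The only genuine divergence is in how the density lower bound is obtained. The paper first shows $X$ is infinite, enumerates it as $0 = x_0 < x_1 < \cdots$, and proves that the set of consecutive gaps $\{x_{k+1} - x_k : k \in \mathbb N\}$ is bounded (a gap of size $\ge 1 + v$ would contradict $x_\kappa + v \in X + \{0,u\}$), whence $x_k \le k \max \Delta(X)$ and $\dinf(X) \ge 1/\max\Delta(X)$. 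You instead extract from the inclusion $X + v \subseteq X \cup (X + u)$ an explicit chain $0 = x_0 < x_1 < \cdots$ inside $X$ with steps in $\{v-u, v\}$, which simultaneously shows $X$ is infinite and yields the clean quantitative bound $\dinf(X) \ge 1/v$ directly in terms of the hypothesis. Your variant is marginally more self-contained (it does not invoke the $\liminf k/x_k$ characterization of lower density from Niven--Zuckerman--Montgomery, only a direct count of chain elements in $\llb 1, N \rrb$), while the paper's gap argument records the slightly stronger structural fact that \emph{all} gaps of $X$ are at most $v$. Both are complete and correct.
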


\begin{proof}
Assume without loss of generality that $u < v$, and observe that $X$ is an infinite set. Otherwise, $X$ would have a maximum element, and we would obtain from Eq.~\eqref{lem:nX=2nX:equ(01)} that
\[
\max X + u = \max(X + \{0, u\}) = \max(X + \{0, v\}) = \max X + v, 
\]
which is absurd, as it implies $u = v$. Accordingly, let $x_0, x_1, x_2, \ldots$ be the natural enumeration of $X$, so that $X = \{x_0, x_1, x_2, \ldots\}$ and $0 = x_0 \le x_k < x_{k+1}$ for all $k \in \mathbb N$. Next, define
\[
\Delta(X) := \{x_{k+1} - x_k : k \in \mathbb N\} \subseteq \mathbb N^+,
\]
and suppose towards a contradiction that $\Delta(X)$ is an infinite set. There then exists $\kappa \in \mathbb N$ such that 
\begin{equation}\label{lem:nX=2nX:equ(02)}
d := x_{\kappa+1} - x_\kappa \ge 1 + v.
\end{equation}
On the other hand, Eq.~\eqref{lem:nX=2nX:equ(01)} yields $x_\kappa + v \in X + \{0, u\}$ and hence 
$x_\kappa + \allowbreak v \in x + \{0, u\}$ for some $x \in X$.
This is however impossible. In fact, either $x > x_\kappa$, and then, by Eq.~\eqref{lem:nX=2nX:equ(02)}, $x+u \ge x \ge x_{\kappa + 1} = \allowbreak x_\kappa + \allowbreak v$; or $x \le x_\kappa$, and then $x \le x + u < x_\kappa + v$ (recall that we are assuming $u < v$). 

It follows that $\Delta(X)$ is a non-empty finite subset of $\mathbb{N}^+$ and therefore has a maximum element. It is then straightforward (by induction) to verify that 
\[
x_k \le x_0 + k \max \Delta(X) = k \max \Delta(X),
\qquad\text{for every }
k \in \mathbb N.
\]
Thus, the lower asymptotic density $\mathrm{d}_\ast(X)$ of $X$ is positive, as we get from \cite[Theorem 11.1]{Nive-Zuck-Mont-1991} that
\begin{equation}\label{lem:nX=2nX:equ(03)}
\mathrm{d}_\ast(X) 
= \liminf_{k \to \infty} \frac{k}{x_k} \ge \liminf_{k \to \infty} \frac{k}{k \max \Delta(X)} = \frac{1}{\max \Delta(X)} > 0.
\end{equation}
Now, suppose for a contradiction that 
$\dinf(2^{r+1} X) \ge 2\dinf(2^r X)$ for all $
r \in \mathbb N$.
A simple induction then shows that $\dinf(2^r X) \ge 2^r \dinf(X)$ for each $r \in \mathbb N$, which, together with Eq.~\eqref{lem:nX=2nX:equ(03)}, implies $\dinf(2^r X) > \allowbreak 1$ for $r$ (strictly) greater than $-\log_2 \mathsf{d}_\ast(X)$. This is however absurd, as $0 \le \dinf(Y) \le 1$ for any $Y \subseteq \mathbb N$.

Consequently, we are guaranteed that there exists an integer $r \ge 0$ such that $\mathrm{d}_\ast(2^{r+1} X) < 2 \mathrm{d}_\ast(2^r X)$; and applying Lemma \ref{lem:kneser-thm-application} to the set $X' := 2^r X$, we conclude that $2^r mX = mX' = 2mX' = 2^{r+1}mX$ for some $m \in \allowbreak \mathbb N^+$, which, by Lemma \ref{lem:torsion-free-implies-aperiodic}, completes the proof.
\end{proof}

We end the section with a folklore result that will play a role in the proof of Lemma \ref{lem:nearly-cancellative}.

\begin{proposition}
\label{prop:folklore-inequality}
If $X$ and $Y$ are non-empty subsets of $\mathbb Z$, then $|X+Y| \ge |X| + |Y| - 1$.
\end{proposition}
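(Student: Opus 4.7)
The plan is to handle the (only nontrivial) case in which both $X$ and $Y$ are finite, since if either is infinite, then $|X+Y| \ge \max(|X|, |Y|) = \infty$ and the inequality holds trivially in cardinal arithmetic. So suppose $|X| = m$ and $|Y| = n$, and list the elements in their natural (increasing) order as $X = \{x_1 < x_2 < \cdots < x_m\}$ and $Y = \{y_1 < y_2 < \cdots < y_n\}$.

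The key observation is that one can exhibit a chain of $m + n - 1$ strictly increasing integers inside $X + Y$. Concretely, I would consider the sequence
\[
x_1 + y_1 \,<\, x_1 + y_2 \,<\, \cdots \,<\, x_1 + y_n \,<\, x_2 + y_n \,<\, x_3 + y_n \,<\, \cdots \,<\, x_m + y_n.
\]
The first $n$ terms are strictly increasing because $y_1 < y_2 < \cdots < y_n$, and the last $m$ terms are strictly increasing because $x_1 < x_2 < \cdots < x_m$. Every term belongs to $X + Y$ by construction, so these $m + n - 1$ distinct elements witness the inequality $|X+Y| \ge m + n - 1 = |X| + |Y| - 1$.

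There is no real obstacle here: the argument is a direct counting via the totally ordered structure of $\mathbb{Z}$, and no appeal to the preceding machinery of the section (Kneser's theorem, lower asymptotic density, etc.) is required. The only mild care needed is to confirm the strict inequalities at the junction $x_1 + y_n < x_2 + y_n$, which follows from $x_1 < x_2$, and to note that the same proof works verbatim in any totally ordered abelian group, hinting at why the bound is sharp (equality occurs when $X$ and $Y$ are arithmetic progressions with a common difference).
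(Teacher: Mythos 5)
Your proof is correct: the chain
\[
x_1 + y_1 < x_1 + y_2 < \cdots < x_1 + y_n < x_2 + y_n < \cdots < x_m + y_n
\]
does consist of $m+n-1$ distinct elements of $X+Y$, and your reduction of the infinite case is sound (if, say, $X$ is infinite, then translating by any fixed $y \in Y$ injects $X$ into $X+Y$). The route is genuinely different from the paper's only in the sense that the paper does not argue at all: it simply cites the statement as a special case of \cite[Proposition 3.5(i)]{Fa-Tr18}, a more general sumset inequality proved there in the setting of power monoids (the result holds verbatim in any linearly orderable semigroup, as your closing remark about totally ordered abelian groups anticipates). What your approach buys is a short, self-contained, elementary argument that keeps the paper independent of that reference; what the citation buys is brevity and a pointer to the natural level of generality. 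One cosmetic point: your parenthetical claim that equality forces $X$ and $Y$ to be arithmetic progressions with a common difference needs the caveat $|X|, |Y| \ge 2$ (a singleton summand always gives equality), but since this is an aside and not part of the proof, it does not affect correctness.
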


\begin{proof}
This is, for instance, a special case of \cite[Proposition 3.5(i)]{Fa-Tr18}.
\end{proof}
\section{Numerical monoids are globally closed}
\label{sect:numerical-monoids-are-gloobally-closed}

Below, we focus on proving that the class of numerical monoids is globally closed (Theorem \ref{thm:numerical-monoids-are-globally-closed}). We start with a proposition and a couple of corollaries that might be of independent interest.

\begin{proposition}
\label{prop:nr-of-reps}
Let $M$ be a monoid, $S$ be a subset of $M$ containing the identity $1_M$, and $n$ be an integer $\ge 3$. If $1_M \notin T \subseteq S$, then $
(S^{n-1} \setminus T) S = S^n$. In particular, the equation $AS = S^n$ has at least $2^{|S|-1}$ solutions $A$ over the large power monoid $\mathcal P(M)$ of $M$.
\end{proposition}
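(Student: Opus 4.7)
The plan is to prove the set equality $(S^{n-1}\setminus T)\,S = S^n$ directly, and then to deduce the counting statement by letting $T$ range over the $2^{|S|-1}$ subsets of $S \setminus \{1_M\}$. The inclusion $(S^{n-1}\setminus T)\,S \subseteq S^{n-1}S = S^n$ is immediate from $S^{n-1}\setminus T \subseteq S^{n-1}$, so the substance of the argument lies in the reverse inclusion $S^n \subseteq (S^{n-1}\setminus T)\,S$.

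Two structural observations will drive the argument: since $1_M \in S$, one has $S \subseteq S^k$ for every $k \ge 1$, so in particular $1_M \in S^{n-1}$ and $S \subseteq S^{n-1}$; moreover, by hypothesis $1_M \notin T$, so $1_M$ is a bona fide element of $S^{n-1}\setminus T$. I would then fix $x \in S^n$ and split into cases according to whether $x \in S^{n-1}$. If $x \in S^{n-1}\setminus T$, the factorization $x = x \cdot 1_M$ already exhibits $x$ as an element of $(S^{n-1}\setminus T)\,S$. If $x \in S^{n-1} \cap T$, then $x \in T \subseteq S$, and the alternative factorization $x = 1_M \cdot x$ works, since $1_M \in S^{n-1}\setminus T$ and $x \in S$. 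Finally, if $x \notin S^{n-1}$, pick any representation $x = s_1 \cdots s_n$ with $s_i \in S$; the prefix $s_1 \cdots s_{n-1}$ cannot belong to $T$, for if it did, then $s_1 \cdots s_{n-1} \in T \subseteq S$ would force $x = (s_1 \cdots s_{n-1})\,s_n \in S^2 \subseteq S^{n-1}$ (this is where the hypothesis $n \ge 3$ enters), contradicting the case assumption. So $x = (s_1 \cdots s_{n-1})\,s_n$ is a valid decomposition.

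For the counting statement, I would observe that $T \subseteq S \subseteq S^{n-1}$, so that distinct subsets $T \subseteq S \setminus \{1_M\}$ yield distinct sets $A := S^{n-1}\setminus T$ (each of which is non-empty, since $1_M$ belongs to all of them). By the first part, every such $A$ satisfies $AS = S^n$, and the power set of $S \setminus \{1_M\}$ has cardinality $2^{|S|-1}$, giving the claimed bound. I do not foresee a serious obstacle: the only mildly delicate step is the case $x \in S^{n-1} \cap T$ in the reverse inclusion, which is precisely where the hypothesis $1_M \notin T$ has to be invoked so that $1_M$ is available as a legitimate prefix in $S^{n-1}\setminus T$; the role of the assumption $n \ge 3$ is likewise confined to the chain $S^2 \subseteq S^{n-1}$ used in the last case.
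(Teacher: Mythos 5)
Your proof is correct and follows essentially the same route as the paper's: both arguments pad with $1_M$ on the left or right depending on whether $x$ itself lies in $S^{n-1}\setminus T$ or in $T\subseteq S$, and in the remaining case both observe that the prefix $s_1\cdots s_{n-1}$ of a length-$n$ factorization cannot lie in $T$ (using $n\ge 3$ via $S^2\subseteq S^{n-1}$). The only difference is organizational: the paper indexes its cases by the minimal $k$ with $x\in S^k$, whereas you split directly on membership in $S^{n-1}$ and $T$, which is if anything slightly cleaner.
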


\begin{proof}
Let $T$ be a subset of $S \setminus \{1_M\}$, and define $Q := S^{n-1} \setminus T$. It is obvious that $QS \subseteq \allowbreak S^{n-1} S = \allowbreak S^n$, so we only need to check that $S^n \subseteq QS$. 
To this end, fix $z \in S^n$, and let $k$ be the smallest integer $\ge 0$ such that $z \in S^k$ (it is clear that $0 \le k \le n$; and since $1_M \in S$, we have $S \subseteq S^2 \subseteq \cdots \subseteq S^n$). Our goal is to show that $z \in QS$, and we distinguish three cases.

\begin{itemize}[leftmargin=0.3in]
\item\textsc{Case 1:} $k = 0$ or $k = 1$. If $k = 0$, then $z \in S^0 = \{1_M\}$ and thus $z = 1_M$. It follows that, regardless of whether $k = 0$ or $k = 1$, $z \in S$ and hence $z \in 1_M S \subseteq QS$ (note that $1_M \in Q$).

\vskip 0.05cm

\item\textsc{Case 2:} $2 \le k \le n-1$. We have $z \notin T$, or else $z \in S$ (by the fact that $T \subseteq S$), contradicting the definition itself of $k$ (which guarantees that $z \notin S^i$ for every non-negative integer $i < k$). It follows that $z \in S^k \setminus T \subseteq Q$ (by the fact that $S^k \subseteq S^{n-1}$), and hence $z \in Q1_M \subseteq QS$.

\vskip 0.05cm

\item\textsc{Case 3:} $k = n$. By the definition of $k$, we have that $z \notin S^i$ for each $i \in \llb 1, n-1 \rrb$. On the other hand, $z = s_1 \cdots s_n$ for some $s_1, \ldots, s_n \in S$. It follows that $z' := s_1 \cdots s_{n-1} \notin T$, or else $z = z' s_n \in S^2$, which is a contradiction because $2 < 3 \le n$. Therefore, $z' \in S^{n-1} \setminus T$ and hence $z = z's_n \in Q S$.
\end{itemize}

As for the ``In particular'' part of the statement, we have already observed that $S \subseteq S^{n-1}$. So, if $T_1$ and $T_2$ are distinct subsets of $S \setminus \{1_M\}$, then the sets $S^{n-1} \setminus T_1$ and $S^{n-1} \setminus T_2$ are likewise distinct. As a result, the equation $AS = S^n$ has at least as many solutions $A$ over $\mathcal P(M)$ as there are subsets of $S \setminus \{1_M\}$; namely, it has at least $2^{|S|-1}$ solutions.
\end{proof}

\begin{corollary}
\label{cor:finite-nr-of-solutions}
Let $M$ be a monoid and $X$ be a subset of $M$ containing the identity $1_M$. Then the equation $AX = X^3$ has finitely many solutions $A$ in $\mathcal P(M)$ if and only if $X$ is a finite set.
\end{corollary}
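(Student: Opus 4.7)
The plan is to prove both directions by fairly direct arguments, with the heavy lifting already done by Proposition \ref{prop:nr-of-reps}.

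For the \emph{if} direction, I would argue as follows. Assume $X$ is finite, and let $A \in \mathcal{P}(M)$ be any solution to $AX = X^3$. Since $1_M \in X$, we have
\[
A = A \cdot \{1_M\} \subseteq AX = X^3,
\]
so every solution is a subset of $X^3$. But $X^3$ is a finite product of finite sets, hence itself finite, so it has only finitely many subsets. This bounds the number of solutions by $2^{|X^3|}$, in particular finitely many.

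For the \emph{only if} direction, I would simply invoke Proposition \ref{prop:nr-of-reps} with $S := X$ and $n := 3$, which is applicable precisely because $1_M \in X$ and $3 \ge 3$. The proposition asserts that the equation $AX = X^3$ admits at least $2^{|X|-1}$ solutions in $\mathcal{P}(M)$. If $X$ is infinite, then $|X \setminus \{1_M\}| = |X|$ is also infinite, so the cardinal $2^{|X|-1}$ is (uncountably) infinite, and consequently so is the collection of solutions.

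Combining the two directions yields the stated equivalence. I don't anticipate any real obstacle: the \emph{if} direction is essentially the one-line observation that $1_M \in X$ forces $A \subseteq X^3$, and the \emph{only if} direction is a one-line citation of Proposition \ref{prop:nr-of-reps}. The only subtlety worth flagging explicitly in the write-up is that the $2^{|X|-1}$ solutions produced by the proposition are genuinely distinct as subsets of $M$ (which is exactly what the ``In particular'' clause of Proposition \ref{prop:nr-of-reps} guarantees, since distinct $T_1, T_2 \subseteq X \setminus \{1_M\} \subseteq X^{n-1}$ yield distinct $X^{n-1} \setminus T_1 \ne X^{n-1} \setminus T_2$).
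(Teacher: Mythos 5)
Your proposal is correct and matches the paper's own proof essentially verbatim: the forward direction uses $1_M \in X$ to force $A \subseteq AX = X^3$ with $X^3$ finite, and the reverse direction is exactly the application of Proposition \ref{prop:nr-of-reps} with $S = X$ and $n = 3$. No further comment is needed.
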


\begin{proof}
If $AX = X^3$ for some $A \in \mathcal P(M)$, then $1_M \in X$ implies $A \subseteq \allowbreak AX = \allowbreak X^3$. Since $|X^3| \le \allowbreak |X|^3$, it follows that if $X$ is finite, then there exist finitely many $A \in \mathcal P(M)$ such that $AX = X^3$.
On the other hand, we have from Proposition \ref{prop:nr-of-reps} (applied with $S = X$ and $n = 3$) that if $X$ is an infinite set, then the equation $AX = X^3$ has infinitely many solutions $A$ in $\mathcal P(M)$.
\end{proof}

For the next corollaries, 
we recall from Sect.~\ref{sect:1} that $\mathcal{P}_1(M)$ (resp., $\mathcal P_{\fin,1}(M)$) denotes the submonoid of the large power monoid $\mathcal{P}(M)$ of $M$ consisting 
of all subsets (resp., finite subsets) of $M$ that contain the identity. 
If $M$ is in particular a numerical monoid, we write $\mathcal P(M)$ and its submonoids additively, in contrast to what we have done so far for arbitrary semigroups. Accordingly, we adjust the notation to fit the context, using $\mathcal P_0(M)$ instead of $\mathcal P_1(M)$ and $\mathcal P_{\fin,0}(M)$ instead of $\mathcal P_{\fin,1}(M)$.

\begin{corollary}
\label{cor:finite-back-to-finite}
Let $g$ be a global isomorphism from a monoid $K$ to a Dedekind-finite monoid $H$ with trivial group of units. If $g(K) H = H$, then
\begin{enumerate*}[label=\textup{(\roman{*})}]
\item\label{cor:finite-back-to-finite(i)} $g[\mathcal P_1(K)] \subseteq \mathcal P_1(H)$ and
\item\label{cor:finite-back-to-finite(ii)} $g[\mathcal P_{\fin,1}(K)] \subseteq \mathcal P_{\fin,1}(H)$.
\end{enumerate*} 
\end{corollary}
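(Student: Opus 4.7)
My plan is to prove (i) and (ii) together, both resting on a single observation, which I will call the \emph{identity-detection lemma}: any $Z \in \mathcal{P}(H)$ with $ZH = H$ must contain $1_H$. Indeed, $1_H \in ZH$ gives $1_H = zh$ for some $z \in Z$ and $h \in H$; by Dedekind-finiteness of $H$, one also has $hz = 1_H$, so $z \in H^\times = \{1_H\}$ and thus $z = 1_H \in Z$.

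For (i), I fix $X \in \mathcal{P}_1(K)$ and exploit that $1_K \in X$ yields $XK = K$. Applying $g$ gives $g(X)\, g(K) = g(K)$, and multiplying on the right by $H$ while invoking the standing hypothesis $g(K) H = H$ produces $g(X) H = H$. The identity-detection lemma then forces $1_H \in g(X)$, i.e., $g(X) \in \mathcal{P}_1(H)$.

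For (ii), I take $X \in \mathcal{P}_{\fin,1}(K)$. Part (i) already places $g(X)$ in $\mathcal{P}_1(H)$, so it remains to verify finiteness. Corollary \ref{cor:finite-nr-of-solutions}, applied to $K$, tells us that the equation $AX = X^3$ has finitely many solutions $A \in \mathcal{P}(K)$; the isomorphism $g$ transports this to a finite solution set for $B\, g(X) = g(X)^3$ in $\mathcal{P}(H)$; and the converse direction of Corollary \ref{cor:finite-nr-of-solutions} applied to $H$ (which is available because $1_H \in g(X)$) then forces $g(X)$ to be finite.

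The main subtlety is the identity-detection lemma and its propagation from $g(K)$ to arbitrary $g(X)$ with $X \in \mathcal{P}_1(K)$. Without the hypothesis $g(K)H = H$, membership of $1_H$ in a subset of $H$ is not a priori expressible in purely multiplicative terms inside $\mathcal{P}(H)$; the triviality of $H^\times$ and the Dedekind-finiteness of $H$ are exactly what allow us to characterize $1_H$ as the unique element reachable as a factor of the identity, and the standing hypothesis is what propagates this characterization to every member of $g[\mathcal{P}_1(K)]$. Once (i) is established, (ii) becomes a routine transport-of-structure argument via Corollary \ref{cor:finite-nr-of-solutions}.
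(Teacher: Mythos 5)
Your proof is correct and follows essentially the same route as the paper: part (i) via $XK=K$, transport through $g$, the hypothesis $g(K)H=H$, and the Dedekind-finiteness/trivial-units argument to detect $1_H$; part (ii) via the bijection between solution sets of $AX=X^3$ and $B\,g(X)=g(X)^3$ together with both directions of Corollary \ref{cor:finite-nr-of-solutions}. Your ``identity-detection lemma'' is just a slightly more explicit packaging of the step the paper performs inline.
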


\begin{proof}
\ref{cor:finite-back-to-finite(i)} If $X \in \mathcal P_1(K)$, then $K = 1_K K \subseteq XK \subseteq K$, namely, $XK = K$. It follows that $g(X) g(K) = g(K)$. Under the assumption that $g(K) H = H$, this implies $
g(X) H = \allowbreak g(X) g(K) H = \allowbreak g(K) H = \allowbreak H$, and hence $1_H = xy$ for some $x \in g(X)$ and $y \in H$. Since $H$ is a Dedekind-finite monoid and its group of units is trivial (by hypothesis), we can therefore conclude that $x = 1_H$. Consequently, $g(X) \in \mathcal P_1(H)$.

\vskip 0.05cm

\ref{cor:finite-back-to-finite(ii)} Fix $X \in \mathcal P_{\fin,1}(K)$. It will be enough to prove that $Y := g(X) \subseteq H$ is a finite set, as we already know from part \ref{cor:finite-back-to-finite(i)} that $1_H \in Y$. Given $T \in \mathcal P(M)$, denote by $\mathcal C_M(T)$ the family of all $S \in \allowbreak \mathcal P(M)$ such that $ST = \allowbreak T^3$, where $M$ is either $H$ or $K$. 
It is clear that $A \in \allowbreak \mathcal{C}_K(X)$ yields $g(A) Y = Y^3$ and hence $g(A) \in \allowbreak \mathcal C_H(Y)$. Conversely, $B \in \mathcal{C}_H(Y)$ yields $g^{-1}(B) X = X^3$ and hence $g^{-1}(B) \in \allowbreak \mathcal C_K(X)$. 

Thus, $g$ establishes a bijection from $\mathcal{C}_K(X)$ to $\mathcal{C}_H(Y)$, with the result that $|\mathcal C_K(X)| = |\mathcal C_H(Y)|$. 
On the other hand, we are guaranteed by Corollary \ref{cor:finite-nr-of-solutions} that $\mathcal C_K(X)$ is a finite set (here we use that $1_K \in X$). It follows that $\mathcal C_H(Y)$ is finite too, which, by another application of Corollary \ref{cor:finite-nr-of-solutions} (here we use that $1_H \in Y$), is only possible if $|Y| < \infty$ (as desired).
\end{proof}

\begin{corollary}
\label{cor:4.5}
If $g$ is a global isomorphism from a monoid $K$ to a numerical monoid $H$, then $g[\mathcal P_1(K)] \subseteq \mathcal P_0(H)$ and $g[\mathcal P_{\fin,1}(K)] \subseteq \mathcal P_{\fin,0}(H)$.
\end{corollary}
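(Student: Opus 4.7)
The plan is to derive Corollary \ref{cor:4.5} as a direct application of Corollary \ref{cor:finite-back-to-finite}, with the numerical monoid $H$ playing the role of the Dedekind-finite monoid with trivial group of units.

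The first two hypotheses of Corollary \ref{cor:finite-back-to-finite} are essentially built into the definition of a numerical monoid: $H$ is commutative and cancellative (hence Dedekind-finite), and its only unit is $0 = 1_H$ (so $H^\times$ is trivial), since $H \subseteq \mathbb N$ under addition and $0$ is the only non-negative integer with a non-negative additive inverse.

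The substantive hypothesis to verify is $g(K) + H = H$ (we switch to additive notation for $H$). Since $K \cdot K = K$ in $\mathcal P(K)$, the subset $A := g(K) \subseteq H$ is a non-empty idempotent of $\mathcal P(H)$, that is, $A + A = A$. Letting $a_0 := \min A$ (which exists because $\emptyset \ne A \subseteq \mathbb N$), the inclusion $a_0 \in A + A$ forces $a_0 = a_1 + a_2$ for some $a_1, a_2 \in A$ with $a_1, a_2 \ge a_0$, so $a_0 \ge 2a_0$ and therefore $a_0 = 0$. It follows that $H = 0 + H \subseteq g(K) + H \subseteq H + H \subseteq H$, yielding $g(K) + H = H$ as required.

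With all the hypotheses of Corollary \ref{cor:finite-back-to-finite} in place, the conclusion is immediate, upon observing that $\mathcal P_1(H) = \mathcal P_0(H)$ and $\mathcal P_{\fin,1}(H) = \mathcal P_{\fin,0}(H)$ under the additive notation adopted for numerical monoids. I do not foresee a real obstacle here: the heavy lifting has already been carried out by Corollaries \ref{cor:finite-nr-of-solutions} and \ref{cor:finite-back-to-finite}, and the only step specific to the numerical-monoid setting is the elementary observation that every non-empty idempotent of $\mathcal P(\mathbb N)$ must contain $0$.
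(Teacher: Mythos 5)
Your proposal is correct and follows essentially the same route as the paper: both reduce the statement to Corollary \ref{cor:finite-back-to-finite} by observing that $g(K)$ is an idempotent of $\mathcal P(H)$, that any non-empty idempotent of $\mathcal P(\mathbb N)$ contains $0$ (via the minimum-element argument), and hence that $g(K)+H=H$, while the Dedekind-finiteness and triviality of $H^\times$ are immediate for a numerical monoid. No gaps.
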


\begin{proof}
If $E$ is an i\-dem\-po\-tent of $\mathcal P(H)$, then $2\min E = \min E$ and hence $0 = \min E \in E$. On the other hand, $K = 1_K K \subseteq K^2 \subseteq K$; that is, $K$ is an idempotent of $\mathcal P(K)$. Since a semigroup isomorphism maps idempotents to idempotents, it is thus clear that $0 \in g(K)$. It follows that $H \subseteq g(K) + \allowbreak H \subseteq \allowbreak H$, namely, $g(K) + H = \allowbreak H$. By Corollary \ref{cor:finite-back-to-finite}, this suffices to finish the proof, for $H$ is a commutative monoid with trivial group of units, and commutative monoids are Dedekind-finite.
\end{proof}

\begin{remark}
\label{remark:conjecture}
We conjecture that every global isomorphism $g$ from a monoid $K$ to a Dedekind-finite monoid $H$ satisfies $g(K) = H$. If true, this would imply (see Sect.~\ref{sect:1} for notation) that
\[
g[\mathcal P_\times(K)] = \mathcal P_\times(H)
\qquad\text{and}\qquad
g[\mathcal P_{\fin,\times}(K)] = \mathcal P_{\fin,\times}(H),
\]
thereby making Corollary \ref{cor:finite-back-to-finite} much smoother. For sure, nothing similar holds for arbitrary semigroups. 

For instance, let $S$ be a \evid{left zero semigroup}, that is, a semigroup with the property that $xy = x$ for all $x, y \in S$. Then $XY = X$ for all $X, Y \in \mathcal P(S)$, with the result that any permutation of $\mathcal P(S)$ is a global isomorphism of $S$. It follows that if $|S| \ge 2$, then there is an automorphism $f$ of $\mathcal P(S)$ such that $f(S) \ne \allowbreak S$ (fix $x \in S$ and let $f$ be the transposition of $\mathcal P(S)$ that swaps $\{x\}$ and $S$).
\end{remark}

%Proving Theorem \ref{thm:numerical-monoids-are-globally-closed} requires, in fact, further preparations. We continue with a special case of Cor\-ol\-lar\-y \ref{cor:finite-back-to-finite} and then 
We break down the remainder of the proof into a series of lemmas.

\begin{comment}
\begin{lemma}
\label{lem:torsion-free}
If a numerical monoid $H$ is globally isomorphic to a monoid $K$, then $K$ is torsion-free. 
\end{lemma}

\begin{proof}
This is straightforward from Theorem \ref{thm:torsion-free-monoids-are-globally-closed}.
Let $f$ be a global isomorphism from $H$ to $K$, and let $y$ be a finite-order element of $K$. There then exist $m, n \in \mathbb N^+$ with $m < n$ such that $y^m = y^n$, implying that
%
\begin{equation}
\label{lem:torsion-free:equ_(01)}
\{1_K, y\}^n = \{1_K, y, \ldots, y^m, \ldots, y^{n-1}, y^n\} = \{1_K, y, \ldots, y^{n-1}\} = \{1_K, y\}^{n-1}.
\end{equation}
%
We need to prove that $y = 1_K$. Set $X := f^{-1}(\{1_K, y\})$. 
By Corollary \ref{cor:4.5} (applied to $f^{-1}$), $X$ is a finite subset of $H$ containing $0$.
So, it follows from Eq.~\eqref{lem:torsion-free:equ_(01)} that $nX = (n-1)X$, which yields $n \max X = \allowbreak (n - \allowbreak 1)\max X$ and hence $\max X = 0$, that is, $X = \{0\}$. This in turn leads to $\{1_K, y\} = \allowbreak f(X) = f(\{0\}) = \allowbreak \{1_K\}$, namely, $y = 1_K$ (as desired).
\end{proof}
\end{comment}

\begin{lemma}
\label{lem:2-to-2-for-numerical-monoids}
Let $f$ be a global isomorphism from a numerical monoid $H$ to a monoid $K$, and fix a non-identity element $y \in K$. Then $f^{-1}(\{1_K, y\}) = \{0, x\}$ for some $x \in H$.
\end{lemma}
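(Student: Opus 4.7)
Set $X := f^{-1}(\{1_K, y\})$. The plan is to pin down three successive properties of $X$: (i) $X$ is a finite subset of $H$ containing $0$; (ii) the sums $nX$ grow only linearly; (iii) consequently $|X| \le 2$, and in fact $|X| = 2$ because $X = \{0\}$ would force $f(X) = \{1_K\}$.

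For (i), apply Corollary~\ref{cor:4.5} to the global isomorphism $f^{-1} \colon K \to H$: since $\{1_K, y\} \in \mathcal P_{\fin,1}(K)$, its preimage $X$ lies in $\mathcal P_{\fin,0}(H)$, i.e.\ $X$ is a finite subset of $H$ with $0 \in X$.

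For (ii), recall that a numerical monoid is a submonoid of the additive monoid $\mathbb N$ and therefore torsion-free. Theorem~\ref{thm:torsion-free-monoids-are-globally-closed} (the global closure of torsion-free monoids) then forces $K$ to be torsion-free as well. Since $y \ne 1_K$, the element $y$ has infinite order in $K$, and so
\[
\{1_K, y\}^n = \{1_K, y, y^2, \ldots, y^n\}
\qquad\text{has exactly } n+1 \text{ elements, for every } n \in \mathbb N^+.
\]
Because $f$ is a homomorphism, $f(nX) = f(X^n) = \{1_K, y\}^n$, whence $|nX| = n + 1$ for every $n \in \mathbb N^+$.

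For (iii), I would iterate Proposition~\ref{prop:folklore-inequality} (applied inside $\mathbb Z$ to the finite set $X \subseteq H \subseteq \mathbb N$) to obtain the standard lower bound
\[
|nX| \ge n(|X| - 1) + 1, \qquad \text{for all } n \in \mathbb N^+.
\]
Comparing with $|nX| = n + 1$ at $n = 2$ gives $|X| \le 2$. On the other hand, $f$ is an isomorphism of monoids, so it maps the identity $\{0\}$ of $\mathcal P(H)$ to the identity $\{1_K\}$ of $\mathcal P(K)$; hence $X = \{0\}$ would yield $\{1_K, y\} = f(X) = \{1_K\}$, contradicting $y \ne 1_K$. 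Therefore $|X| = 2$ and, since $0 \in X$, we conclude $X = \{0, x\}$ for some $x \in H$.

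The only step that is not entirely mechanical is realising that one should bring in Theorem~\ref{thm:torsion-free-monoids-are-globally-closed} to force $K$ to be torsion-free (so that the powers of $y$ are distinct), and then couple this with the superadditivity of $|X+Y|$ in $\mathbb Z$; once both ingredients are on the table, the comparison of cardinalities is immediate.
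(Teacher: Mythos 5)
There is a genuine gap at the pivot of your argument, namely the deduction ``$f(nX) = \{1_K,y\}^n$, whence $|nX| = n+1$.'' The map $f$ is an isomorphism $\mathcal P(H) \to \mathcal P(K)$: it is a bijection between the two \emph{families of subsets}, not a map acting elementwise on $H$. Knowing that the subset $nX$ of $H$ corresponds to the subset $\{1_K, y, \ldots, y^n\}$ of $K$ under this bijection tells you nothing about the cardinality of $nX$. (If global isomorphisms preserved cardinalities of the sets they permute, most of the difficulties in this circle of problems would evaporate; note that the paper's own Corollary~\ref{cor:finite-back-to-finite}\ref{cor:finite-back-to-finite(ii)} goes to some trouble merely to show that $f$ carries finite sets to finite sets, precisely because cardinality is not an invariant of the isomorphism.) Once $|nX| = n+1$ is removed, your step (iii) collapses: the superadditivity bound $|nX| \ge n(|X|-1)+1$ from Proposition~\ref{prop:folklore-inequality} has nothing to be compared against.

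Everything else in your sketch is sound and matches ingredients of the paper's proof: the use of Corollary~\ref{cor:4.5} applied to $f^{-1}$ to get $0 \in X$ and $|X| < \infty$, the appeal to Theorem~\ref{thm:torsion-free-monoids-are-globally-closed} to make $K$ torsion-free so that $1_K, y, y^2, \ldots$ are pairwise distinct, and the observation that $X \ne \{0\}$ by injectivity of $f$. What is missing is a quantity that (a) is preserved by the isomorphism and (b) detects $|X|$. The paper's choice is the number of solutions $A \in \mathcal P(H)$ of the equation $A + X = 3X$, which $f$ puts in bijection with the solutions $B$ of $B\{1_K,y\} = \{1_K,y\}^3$. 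On the $K$-side, torsion-freeness lets one check by hand that there are exactly two such $B$ (namely $\{1_K, y^2\}$ and $\{1_K,y\}^2$); on the $H$-side, Proposition~\ref{prop:nr-of-reps} produces at least $2^{|X|-1}$ solutions. Comparing $2^{|X|-1} \le 2$ forces $|X| = 2$. If you want to rescue your line of attack, you would need to replace the cardinality comparison with an invariant of this kind.
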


\begin{proof}
Set $Y := \{1_K, y\}$. 
From Corollary \ref{cor:4.5} (applied to $f^{-1}$), we have that $X := f^{-1}(Y)$ is a finite subset of $H$ containing $0$. 
Since $f$ is injective and maps the identity $\{0\}$ of $\mathcal P(H)$ to the identity $\{1_K\}$ of $\mathcal P(K)$ (see Remark \ref{remarks:units-and-unit-stability}\ref{remarks:units-and-unit-stability(3)}), it is then obvious that $k := |X| - 1 \in \mathbb N^+$. The claim now reduces to proving that $k = 1$. To this end, let us consider the families
$$
\mathcal{C}_H(X) := \{A \in \mathcal P(H) \colon A + X = 3X\}
\qquad\text{and}\qquad
\mathcal{C}_K(Y) := \{B \in \mathcal P(K) \colon BY = Y^3\}.
$$
Similarly as in the proof of Corollary \ref{cor:finite-back-to-finite}\ref{cor:finite-back-to-finite(ii)}, we have $d := |\mathcal{C}_H(X)| = |\mathcal{C}_K(Y)|$. In particular, we gather
from the above and Proposition \ref{prop:nr-of-reps} (applied with $S = X$ and $n = 3$) that $d = |\mathcal{C}_H(X)| \ge 2^k \ge 2$. 

On the other hand, Theorem \ref{thm:torsion-free-monoids-are-globally-closed} guarantees that $K$ is torsion-free, as it is globally isomorphic to a numerical monoid and numerical monoids are torsion-free. Hence, $y^4 \notin Y^3$ and $y^3 \notin Y^2$. So, if $B \in \allowbreak \mathcal{C}_K(Y)$, then $y^3 \notin B$, and thus $B \subseteq Y^2$. Moreover, $B \ne \{1_K\}$ (otherwise $BY = Y \ne Y^3)$ and $y^2 \in B$ (otherwise $B \subseteq Y$, and hence $BY \subseteq Y^2 \subsetneq Y^3$). It follows that $B \in \mathcal{C}_K(Y)$ if and only if $B = \{1_K, y^2\}$ or $B = Y^2$, with the result that $2 \le 2^k \le d = |\mathcal{C}_K(Y)| = 2$. This is only possible if $k = 1$ (as desired).
\end{proof}

\begin{lemma}
\label{lem:nearly-cancellative}
If a numerical monoid $H$ is globally isomorphic to a monoid $K$, then $x^2 \ne xy$ for all $x, y \in K$ with $x \ne y$.
\end{lemma}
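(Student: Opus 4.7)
The plan is to argue by contradiction, assuming some $x, y \in K$ satisfy $x \ne y$ and $x^2 = xy$. I would first dispose of the cases $x = 1_K$ or $y = 1_K$: the former forces $y = 1_K = x$, while the latter forces $x^2 = x$ and hence $x = 1_K$ by the torsion-freeness of $K$ guaranteed by Theorem~\ref{thm:torsion-free-monoids-are-globally-closed}; in either subcase we contradict $x \ne y$. So we may assume $1_K, x, y$ are pairwise distinct, and Lemma~\ref{lem:2-to-2-for-numerical-monoids} then supplies positive integers $a \ne b$ with $f^{-1}(\{1_K, x\}) = \{0, a\}$ and $f^{-1}(\{1_K, y\}) = \{0, b\}$.

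The key move is to observe that $xy = x^2$ produces the coincidence of products
\[
\{1_K, x\} \cdot \{1_K, x, y\} = \{1_K, x, y, x^2\} = \{1_K, x\} \cdot \{1_K, y\}
\]
in $\mathcal P(K)$. Setting $T := f^{-1}(\{1_K, x, y\})$ and applying the semigroup isomorphism $f^{-1}$ transports this identity to the sumset identity $\{0, a\} + T = \{0, a, b, a+b\}$ in $\mathcal P(H)$. By Corollary~\ref{cor:4.5}, $T$ is a 3-element subset of $\mathbb N$ containing $0$, say $T = \{0, s_1, s_2\}$ with $0 < s_1 < s_2$. A short comparison of elements on either side---matching maxima gives $s_2 = b$; next, $s_1 \in \{0, a, b, a+b\}$ together with $0 < s_1 < b < a + b$ forces $s_1 = a$; finally $2a = a + s_1 \in \{b, a + b\}$ rules out $2a = a + b$ (which would give $a = b$)---pins down $b = 2a$, and hence $T = \{0, a, 2a\} = \{0, a\} + \{0, a\}$.

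Applying $f$ to this last equality yields $\{1_K, x, y\} = f(T) = \{1_K, x\}^2 = \{1_K, x, x^2\}$; and since the torsion-freeness of $K$ rules out both $x^2 = 1_K$ and $x^2 = x$ for $x \ne 1_K$, we must have $y = x^2$. Substituting back into $xy = x^2$ then gives $x^3 = x^2$, the final contradiction with torsion-freeness. I expect the only genuinely non-routine step to be the combinatorial extraction of $b = 2a$ from the sumset identity $\{0, a\} + T = \{0, a, b, a+b\}$; the remaining steps are direct applications of Theorem~\ref{thm:torsion-free-monoids-are-globally-closed}, Corollary~\ref{cor:4.5}, and Lemma~\ref{lem:2-to-2-for-numerical-monoids}.
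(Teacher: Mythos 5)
Your proposal follows essentially the same route as the paper: the same product identity in $\mathcal P(K)$, the same transport via $f^{-1}$ to the sumset identity $\{0,a\}+T=\{0,a,b,a+b\}$, the same extraction of $T=\{0,a,2a\}=2\,f^{-1}(\{1_K,x\})$, and the same final contradiction with torsion-freeness. There is, however, one genuine gap: you assert that ``by Corollary \ref{cor:4.5}, $T$ is a $3$-element subset of $\mathbb N$ containing $0$.'' Corollary \ref{cor:4.5} only yields that $T:=f^{-1}(\{1_K,x,y\})$ is a \emph{finite} subset of $H$ containing $0$; it says nothing about its cardinality. A global isomorphism has no a priori reason to preserve the size of finite sets --- indeed, that is exactly why Lemma \ref{lem:2-to-2-for-numerical-monoids} needs a non-trivial counting argument just to show that the preimage of a $2$-element set has $2$ elements. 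As written, your argument breaks at the line ``say $T=\{0,s_1,s_2\}$.''

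The gap is fillable, and the paper fills it with Proposition \ref{prop:folklore-inequality} (which your proposal never invokes): from $\{0,a\}+T=\{0,a,b,a+b\}$ and $|A+B|\ge |A|+|B|-1$ one gets $4\ge 2+|T|-1$, so $|T|\le 3$; moreover $|T|\ne 1$ by injectivity of $f$, and $|T|\ne 2$ because $T=\{0,c\}$ would force $c=b$ (compare maxima in $\{0,a\}+T=\{0,a\}+\{0,b\}$), whence $T=f^{-1}(\{1_K,y\})$ and $\{1_K,x,y\}=\{1_K,y\}$, absurd. Only after these three steps may one write $T=\{0,s_1,s_2\}$ and run your (correct) combinatorial extraction of $b=2a$. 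A minor further point: your deduction ``$0<s_1<b$ forces $s_1=a$'' tacitly assumes $a<b$; if instead $a>b$, no admissible $s_1$ exists and the contradiction arrives even sooner, but the case split should be acknowledged (the paper hides it behind a ``without loss of generality''). The opening reductions ($x\ne 1_K$, $y\ne 1_K$) and the endgame ($y=x^2$, hence $x^2=x^3$, contradicting torsion-freeness via Theorem \ref{thm:torsion-free-monoids-are-globally-closed}) are correct and match the paper.
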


\begin{proof}
Let $f$ be a global isomorphism from $H$ to $K$, and suppose for a contradiction that $x^2 = xy$ for some $x, y \in K$ with $x \ne y$. Accordingly, we have
\begin{equation}
\label{lem:nearly-cancellative:equ_01}
\{1_K, x\}\{1_K, y\} = \{1_K, x, y, xy\} = \{1_K, x, y, xy, x^2\} = \{1_K, x\} \{1_K, x, y\},
\end{equation}
all calculations being carried over in the large power monoid $\mathcal P(K)$ of $K$. Set 
\[
X := f^{-1}(\{1_K, x\}),
\qquad 
Y := f^{-1}(\{1_K, y\}),
\qquad\text{and}\qquad
Z := f^{-1}(\{1_K, x, y\}).
\]
It is clear that $x \ne 1_K$, or else $y = xy = x^2 = 1_K = x$ (a contradiction). In turn, this yields $y \ne 1_K$; otherwise, $x^2 = xy = x \ne 1_K$, which is impossible, as $H$ is torsion-free and, by Theorem~\ref{thm:torsion-free-monoids-are-globally-closed}, so must be $K$.
In view of Lemma \ref{lem:2-to-2-for-numerical-monoids}, it follows that $X = \{0, a\}$ and $Y = \{0, b\}$ for some non-zero $a, b \in H$ with $a \ne b$, and we may assume without loss of generality that $a < b$. Moreover, Corollary \ref{cor:finite-back-to-finite} shows that $\{0\} \ne \allowbreak Z \in \allowbreak \mathcal P_{\fin,0}(H)$, and hence $n := \allowbreak |Z| - 1 \in \mathbb N^+$. Consequently, we gather from Eq.~\eqref{lem:nearly-cancellative:equ_01} that
\begin{equation}
\label{lem:nearly-cancellative:equ_02}
\{0, a, b, a+b\} = X + Y = X + Z. 
\end{equation}
Since $1 \le a < b < a+b$, it is now immediate from the above and Proposition \ref{prop:folklore-inequality} that 
\[
4 = |X+Y| = |X + Z| \ge |X| + |Z| - 1 = n+2, 
\]
Thus, $1 \le n \le 2$, and we claim that $n = 2$. Suppose to the contrary that $n = 1$, namely, $Z = \{0, c\}$ for some non-zero $c \in H$. It is then straightforward from Eq.~\eqref{lem:nearly-cancellative:equ_02} that 
\[
a + b = \max X + \max Y = \max X + \max Z = a + c
\]
which implies $Y = Z$ and hence contradicts the fact that $f(Y) = \{1_K, y\} \ne \{1_K, x, y\} = f(Z)$.

All in all, we have therefore established that $Z = \{0, u, v\}$ for some non-zero $u, v \in H$ with $u < v$. Together with Eq.~\eqref{lem:nearly-cancellative:equ_02}, this leads us to conclude that
\[
\{0, a, u, v, a+u, a+v\} = \{0, a\} + \{0, u, v\} = \{0, a, b, a+b\}.
\]
It is then a simple exercise to check that $u = a$ and hence $v = b = 2a$ (we leave any further details to the reader). Consequently, we obtain $Z = 2X$, from which
\[
\{1_K, x, y\} = f(Z) = f(2X) = \{1_K, x\}^2 = \{1_H, x, x^2\}.
\]
It follows that $1_K \ne y = x^2$ and hence $1_K \ne x^2 = xy = x^3$. This is however impossible (and finishes the proof), as we have already noted that $K$ is a torsion-free monoid.
\end{proof}

\begin{lemma}
\label{lem:numerica-monoid-only-gl-iso-to-cancellative}
If a numerical monoid $H$ is globally isomorphic to a commutative monoid $K$, then $K$ is cancellative.
\end{lemma}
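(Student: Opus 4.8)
The plan is to argue by contradiction, transporting a failure of cancellativity in $K$ across the global isomorphism into the additive combinatorics of $\mathbb N$, where Proposition~\ref{prop:nX=2nX} (our consequence of Kneser's theorem) can be brought to bear. Fix a global isomorphism $f$ from $H$ to $K$. Then $K$ is a monoid by \cite[Lemma 1.1]{Gou-Isk-Tsi-1984}, it is torsion-free by Theorem~\ref{thm:torsion-free-monoids-are-globally-closed}, and Theorem~\ref{prop:global-iso-maps-unit-group-to-unit-group} forces $K^\times=\{1_K\}$, a numerical monoid having trivial group of units. Assuming $K$ is not cancellative, commutativity yields elements $a\ne b$ and $z$ with $az=bz$; we may assume $a,b\ne 1_K$ (the case where one equals $1_K$ is analogous, with the corresponding doubleton below replaced by $\{0\}$), and $z\ne 1_K$, else $a=b$, so $z$ has infinite order by torsion-freeness.

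Next I would manufacture a collision of the exact shape fed into Proposition~\ref{prop:nX=2nX}. From $za=zb$ one gets, in $\mathcal P(K)$,
\[
\{z\}\{1_K,a\}=\{z,za\}=\{z,zb\}=\{z\}\{1_K,b\}.
\]
Applying the global isomorphism $f^{-1}$ from $K$ to the numerical monoid $H$, Lemma~\ref{lem:2-to-2-for-numerical-monoids} (read through $f$, for the non-identity elements $a,b\in K$) gives $f^{-1}(\{1_K,a\})=\{0,\alpha\}$ and $f^{-1}(\{1_K,b\})=\{0,\beta\}$ for some $\alpha,\beta\in H$, with $\alpha\ne\beta$ since $f^{-1}$ is injective and $a\ne b$. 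Writing $W:=f^{-1}(\{z\})\subseteq H$ and pushing the displayed identity through $f^{-1}$, I obtain
\[
W+\{0,\alpha\}=W+\{0,\beta\},\qquad \alpha\ne\beta.
\]
Once it is known that $0\in W$, Proposition~\ref{prop:nX=2nX} applies and yields $nW=(n+1)W$ for all large $n$.

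The contradiction then comes from infinite order: since $z$ has infinite order, the singletons $\{z^n\}=\{z\}^n$ are pairwise distinct, hence so are their $f^{-1}$-images $nW=f^{-1}(\{z^n\})$, clashing with the eventual equality $nW=(n+1)W$. The hard part will be the hypothesis $0\in W$, i.e. that the singleton $\{z\}$ is carried by $f^{-1}$ to a subset of $H$ containing the identity $0$; this cannot be read off from $\{1_K,z\}$ directly, because a global isomorphism preserves neither inclusions nor cardinalities, so it must be extracted from the multiplicative data using Corollary~\ref{cor:4.5} (which tracks the identity and finiteness through $f^{-1}$) together with $K^\times=\{1_K\}$, and it is closely tied to the expectation recorded in Remark~\ref{remark:conjecture} that $f(H)=K$. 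I expect this identity-tracking step—rather than the density estimate, which is already packaged in Proposition~\ref{prop:nX=2nX}—to be the crux. A natural fallback, should $0\in W$ resist a direct proof, is to work instead with the repeated factor in the collision $\{1_K,a,b\}\{z\}=\{1_K,a\}\{z\}$, whose outer factors have finite preimages containing $0$ by Corollary~\ref{cor:4.5}, and to detect the failure through a counting argument governed by the growth of the powers $\{1_K,a,b\}^n$.
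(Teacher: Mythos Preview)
Your setup matches the paper's: from $za=zb$ with $a\ne b$ you obtain $\{z\}\{1_K,a\}=\{z\}\{1_K,b\}$ in $\mathcal P(K)$, and via Lemma~\ref{lem:2-to-2-for-numerical-monoids} this becomes $W+\{0,\alpha\}=W+\{0,\beta\}$ with $\alpha\ne\beta$, where $W:=f^{-1}(\{z\})$. You correctly isolate $0\in W$ as the obstruction, but this is a genuine gap, not a detail to be filled in later: none of the tools you cite can supply it. Corollary~\ref{cor:4.5} only constrains $f^{-1}$ on sets that contain $1_K$; the conjecture in Remark~\ref{remark:conjecture} concerns $f^{-1}(K)$, not $f^{-1}(\{z\})$ for an individual non-identity $z$; and your fallback collision $\{1_K,a,b\}\{z\}=\{1_K,a\}\{z\}$, while a valid identity, comes with a ``counting argument'' that is left entirely unspecified.

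The paper bypasses the problem with a simple translation you are missing: set $W':=W-\min W$, so that $0\in W'\subseteq\mathbb N$ and still $W'+\{0,\alpha\}=W'+\{0,\beta\}$. Proposition~\ref{prop:nX=2nX} then gives $nW'=2nW'$ for some $n\ge1$, equivalently $2nW=nW+n\{\min W\}$. Note that your intended endgame no longer closes after this shift: from $(m{+}1)W'=mW'$ one only gets $(m{+}1)W=mW+\min W$, and when $\min W>0$ the minima $m\min W$ are pairwise distinct, so the infinite order of $z$ yields no contradiction. The paper instead finishes as follows. The singleton $\{\min W\}$ is cancellative in $\mathcal P(H)$, hence its $f$-image is cancellative in $\mathcal P(K)$; by \cite[Proposition~1]{Tri-2024(a)}, cancellative elements of the large power semigroup of a \emph{commutative} semigroup are singletons of cancellative elements, so $f(\{\min W\})=\{c\}$ with $c\in K$ cancellative. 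Applying $f$ to $2nW=nW+n\{\min W\}$ gives $z^{2n}=z^{n}c^{n}$, and Lemma~\ref{lem:nearly-cancellative} forces $z^{n}=c^{n}$---impossible, since $c^{n}$ is cancellative but $z^{n}$ is not. The ingredients absent from your proposal are thus the translation by $\min W$, the cancellative-singleton transfer via \cite[Proposition~1]{Tri-2024(a)}, and the closing appeal to Lemma~\ref{lem:nearly-cancellative}.
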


\begin{proof}
Let $f$ be a global isomorphism from $H$ to $K$, and suppose towards a contradiction that $K$ is not cancellative, that is, there exist $x, y, z \in K$ with $y \ne z$ and $xy = xz$. Then, in $\mathcal P(K)$, we have
\begin{equation}
\label{eq:lem:numerica-monoid-only-gl-iso-to-cancellative(1)}
x \, \{1_K, y\} = x \, \{1_K, z\}.
\end{equation}
Set $X := f^{-1}(\{x\})$ and $X' := X - \min X$. Considering that $0 \in X' \subseteq \mathbb N$, we gather from Eq.~\eqref{eq:lem:numerica-monoid-only-gl-iso-to-cancellative(1)}
%, Corollary \ref{cor:finite-back-to-finite}, 
and Lemma \ref{lem:2-to-2-for-numerical-monoids} that $X' +\{0, u\} = X' + \{0, v\} $ for some $u, v \in H$ with $u \ne v$. Thus, Proposition \ref{prop:nX=2nX} guarantees that $nX' = 2nX'$ for a certain $n \in \mathbb N^+$, which in turn implies
\begin{equation}\label{eq:lem:numerica-monoid-only-gl-iso-to-cancellative(2)}
2nX = 2nX' + 2n \min X = nX' + 2n \min X = nX + n\min X.
\end{equation}
Now, by \cite[Proposition 1]{Tri-2024(a)}, every cancellative element of the large power sem\-i\-group of a \textit{commutative} semigroup $S$ is a singleton containing a cancellative element of $S$. Since $H$ is a cancellative monoid, $\min X$ is an element of $H$, and a semigroup isomorphism maps cancellative elements to cancellative elements \cite[Remark 2]{Tri-2024(a)}, it follows that $f(\{\min X\}) = \{a\}$ for a cancellative element $a \in K$. Therefore, from Eq.~\eqref{eq:lem:numerica-monoid-only-gl-iso-to-cancellative(2)}, we obtain that $x$ and $a$ satisfy the relation $x^{2n} = x^n a^n$ in $K$. By Lemma \ref{lem:nearly-cancellative}, however, this is only possible if $x^n = a^n$, which leads to a contradiction and completes the proof, because $a^n$ is a cancellative element of $K$, but $x^n$ is not. (It is a basic exercise to show that, in a semigroup, an element is cancellative if and only if all its powers are.)
\end{proof}

Finally, we have all the necessary ingredients to prove the main result of this section.

\begin{theorem}
\label{thm:numerical-monoids-are-globally-closed}
The class of numerical monoids is globally closed.
\end{theorem}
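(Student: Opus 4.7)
The plan is to let $f$ be a global isomorphism from a numerical monoid $H$ to a semigroup $K$, and to show that $K$ must be isomorphic to a numerical monoid. Automatically, $K$ is a monoid by \cite[Lemma 1.1]{Gou-Isk-Tsi-1984}, and it is torsion-free by Theorem~\ref{thm:torsion-free-monoids-are-globally-closed}. As observed in Sect.~\ref{sect:1}, \cite[Corollary 1]{Tri-2024(a)} ensures that any two globally isomorphic cancellative commutative semigroups are already isomorphic. Thus, granting Lemma~\ref{lem:numerica-monoid-only-gl-iso-to-cancellative}, it will suffice to prove that $K$ is commutative: then cancellativity of $K$ will follow, and hence $K \cong H$, so $K$ is (isomorphic to) a numerical monoid.

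To establish commutativity, I would fix $y_1, y_2 \in K$ and suppose for contradiction that $y_1 y_2 \ne y_2 y_1$; one may further reduce to the case where $y_1 \ne y_2$ and both $y_1, y_2 \ne 1_K$, since otherwise commutativity is immediate. By Lemma~\ref{lem:2-to-2-for-numerical-monoids}, there exist $a_1, a_2 \in H$ such that $f^{-1}(\{1_K, y_i\}) = \{0, a_i\}$ for $i = 1, 2$. Since $H$ is commutative, $\{0, a_1\} + \{0, a_2\} = \{0, a_2\} + \{0, a_1\}$ in $\mathcal{P}(H)$, and applying $f$ yields
\[
\{1_K, y_1, y_2, y_1 y_2\} = \{1_K, y_1\} \cdot \{1_K, y_2\} = \{1_K, y_2\} \cdot \{1_K, y_1\} = \{1_K, y_1, y_2, y_2 y_1\}.
\]
Under the assumption $y_1 y_2 \ne y_2 y_1$, this forces both $y_1 y_2$ and $y_2 y_1$ to lie in $\{1_K, y_1, y_2\}$, which produces six combinatorial subcases. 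Each would be resolved purely by associativity: computing a suitable triple product such as $y_1 y_2 y_1$ or $y_2 y_1 y_2$ in two different bracketings forces either $y_i^2 = y_i$ or $y_i = 1_K$ for some $i$, contradicting either torsion-freeness of $K$ or the standing assumption. For example, if $y_1 y_2 = y_1$ and $y_2 y_1 = 1_K$, then $y_1^2 = (y_1 y_2) y_1 = y_1 (y_2 y_1) = y_1$, which is impossible. The other five subcases admit an entirely analogous treatment.

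With commutativity in hand, Lemma~\ref{lem:numerica-monoid-only-gl-iso-to-cancellative} delivers cancellativity of $K$, and \cite[Corollary 1]{Tri-2024(a)} then gives $K \cong H$, completing the proof. The main obstacle is organizational rather than conceptual: one has to enumerate the six combinations of $y_1 y_2, y_2 y_1 \in \{1_K, y_1, y_2\}$ with $y_1 y_2 \ne y_2 y_1$ and, for each, identify the right triple product and pair of bracketings that expose a torsion-freeness violation. Notably, this argument does not even need to invoke Lemma~\ref{lem:nearly-cancellative}; the weaker property of torsion-freeness (already guaranteed by Theorem~\ref{thm:torsion-free-monoids-are-globally-closed}) suffices for the commutativity step.
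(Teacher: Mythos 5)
Your proposal is correct and follows essentially the same route as the paper: reduce to showing that $K$ is a commutative cancellative monoid and then invoke \cite[Corollary 1]{Tri-2024(a)}, with Lemma \ref{lem:numerica-monoid-only-gl-iso-to-cancellative} supplying cancellativity. The only divergence is the commutativity step, where the paper simply cites \cite[Remark 3]{Tri-2024(a)} while you run a six-case analysis; your cases do all close (I checked each of the six assignments of $y_1y_2, y_2y_1$ to distinct elements of $\{1_K, y_1, y_2\}$, and each yields either $y_i = 1_K$ or $y_i^2 = y_i$, contradicting torsion-freeness), but the detour is unnecessary. Commutativity of $K$ is immediate without Lemma \ref{lem:2-to-2-for-numerical-monoids} or any case analysis: $\mathcal P(K)$ is isomorphic to the commutative semigroup $\mathcal P(H)$ and is therefore commutative, so $\{y_1\}\{y_2\} = \{y_2\}\{y_1\}$ for all $y_1, y_2 \in K$, which is exactly $y_1 y_2 = y_2 y_1$. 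This is presumably the content of the cited remark, and it also renders your appeal to torsion-freeness in this step superfluous.
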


\begin{proof}
Let $f$ be a global isomorphism from a numerical monoid $H$ to a semigroup $K$. By \cite[Remark 3]{Tri-2024(a)} and \cite[Lemma 1.1]{Gou-Isk-Tsi-1984}, $K$ is a fortiori a commutative monoid. It thus follows from Lemma \ref{lem:numerica-monoid-only-gl-iso-to-cancellative} that $K$ is also cancellative. Accordingly, we conclude from \cite[Corollary 1]{Tri-2024(a)} that $H$ is isomorphic to $K$, which ultimately proves that the class of numerical monoids is globally closed.
\end{proof}
\section*{Acknowledgments}
This work was supported by the Natural Science Foundation of Hebei Province under grant A2023205045.
%The second author dedicates this work to his friends, Pietro and Carmelo Minniti, on the 20th anniversary of their graduation at the University of Reggio Calabria (in Italy). %They are thankful to the anonymous referees of an earlier version of this work for their valuable suggestions.

\end{document}